\theoremstyle{plain}
\newtheorem{theorem}{Theorem}[section]
\newtheorem{lemma}[theorem]{Lemma}
\newtheorem{corollary}[theorem]{Corollary}
\newtheorem{proposition}[theorem]{Proposition}
\newtheorem{remark}[theorem]{Remark}
\theoremstyle{definition}
\newtheorem{definition}[theorem]{Definition}
\newtheorem{problem}{Problem}
\theoremstyle{remark}
\newcommand{\cA}{\mathcal{A}}
\newcommand{\cD}{\mathcal{D}}
\newcommand{\cE}{\mathcal{E}}
\newcommand{\cF}{\mathcal{F}}
\newcommand{\cI}{\mathcal{I}}
\newcommand{\I}{\cI}
\newcommand{\cJ}{\mathcal{J}}
\newcommand{\J}{\cJ}
\newcommand{\cM}{\mathcal{M}}
\newcommand{\cN}{\mathcal{N}}
\newcommand{\cP}{\mathcal{P}}
\newcommand{\cK}{\mathcal{K}}
\newcommand{\K}{\cK}
\newcommand{\cU}{\mathcal{U}}
\newcommand{\continuum}{\mathfrak{c}}
\newcommand{\cc}{\continuum}
\newcommand{\bnumber}{\mathfrak{b}}
\newcommand{\bb}{\bnumber}
\newcommand{\dnumber}{\mathfrak{d}}
\newcommand{\unumber}{\mathfrak{u}}
\newcommand{\uu}{\mathfrak{u}}
\newcommand{\pp}{\mathfrak{p}}
\newcommand{\dd}{\dnumber}
\newcommand{\cf}{\mathrm{cf}} 
\DeclareMathOperator{\add}{add}
\DeclareMathOperator{\cl}{cl}
\newcommand{\fin}{\mathrm{Fin}}
\newcommand{\nwd}{\mathrm{NWD}}  
\newcommand{\mz}{\mathrm{NULL}}  
\newcommand{\conv}{\mathrm{CONV}} 
\newcommand{\ctbl}{\mathrm{CTBL}}
\DeclareMathOperator{\lh}{lh}
\begin{document}


\title{More on yet another ideal version of the bounding number}


\author[Adam Kwela]{Adam Kwela}
\address[Adam Kwela]{Institute of Mathematics\\ Faculty of Mathematics\\ Physics and Informatics\\ University of Gda\'{n}sk\\ ul.~Wita  Stwosza 57\\ 80-308 Gda\'{n}sk\\ Poland}
\email{Adam.Kwela@ug.edu.pl}
\urladdr{https://mat.ug.edu.pl/~akwela}


\date{\today}


\subjclass[2010]{Primary: 
03E05. 
Secondary:
03E15, 
03E17, 
03E35. 
}


\keywords{
ideal, 
filter, 
Borel ideal, 
analytic ideal,
dominating number, 
ideal with topological representation,
bounding number,
quasi-normal convergence, 
quasi-normal space, 
QN-space,
}


\begin{abstract}
This is a continuation of the paper [J. Symb. Log. 87 (2022), 1065--1092]. For an ideal $\I$ on $\omega$ we denote $\mathcal{D}_\I=\{f\in\omega^\omega: f^{-1}[\{n\}]\in\I \text{ for every $n\in \omega$}\}$ and write $f\leq_\I g$ if $\{n\in\omega:f(n)>g(n)\}\in\I$, where $f,g\in\omega^\omega$. 

We study the cardinal numbers $\bb(\geq_\I\cap (\cD_\I \times \cD_\I))$ describing the smallest sizes of subsets of $\mathcal{D}_\I$ that are unbounded from below with respect to $\leq_\I$. 

In particular, we examine the relationships of $\bb(\geq_\I\cap (\cD_\I \times \cD_\I))$ with the dominating number $\dd$. We show that, consistently, $\bb(\geq_\I\cap (\cD_\I \times \cD_\I))>\dd$ for some ideal $\I$, however $\bb(\geq_\I\cap (\cD_\I \times \cD_\I))\leq\dd$ for all analytic ideals $\I$. Moreover, we give example of a Borel ideal with $\bb(\geq_\I\cap (\cD_\I \times \cD_\I))=\add(\cM)$. 
\end{abstract}


\maketitle


\section{Introduction}

Let $\I$ be an ideal on $\omega$. For $f,g\in\omega^\omega$ write $f \leq_\I  g$ if $\{n\in\omega:f(n)>g(n)\}\in\I$. Moreover, denote $\mathcal{D}_\I=\{f\in\omega^\omega: f^{-1}[\{n\}]\in\I \text{ for every $n\in \omega$}\}$. Let $\bb(\geq_\I\cap (\cD_\I \times \cD_\I))$ (for simplicity denoted also by $\bb(\I)$) be the smallest size of a set in $\mathcal{D}_\I$ not bounded from below with respect to the order $\leq_\I$ by any member of $\mathcal{D}_\I$:
$$\bb(\I)=\min\left\{|\cF|:\cF\subseteq\cD_\I\land\neg(\exists_{g\in \cD_\I}\, \forall_{f\in \cF}\, g\leq_\I f\right\}.$$

The cardinal $\bb(\I)$ in the case of maximal ideals has been deeply studied by Canjar in 1980s in the context of smallest sizes of cofinal and coinitial subsets in ultrapowers $\omega^\omega/\cU$ ordered by $\leq_{\I}$, where $\cU=\I^\star$ (see~\cite{Canjar2,Canjar,CanjarPhD}). In the case of Borel ideals, we have extensively examined $\bb(\I)$ in~\cite{nasza}. 

This research is partially motivated by the study of ideal-QN-spaces. A topological space $X$ is a QN-space if it does not distinguish pointwise and quasi-normal convergence of sequences of real-valued continuous functions defined on $X$. The research on QN-spaces was initiated by Bukovsk\'{y}, Rec\l{}aw and Repick\'{y} in \cite{MR1129696}, who proved that the smallest size of non-QN-space is equal to the bounding number $\bnumber$. Studies of QN-spaces have been continued in papers \cite{MR2463820,MR2778559,MR2294632,MR1129696,MR1815270,MR1477547,MR1800160,MR2280899,MR2881299}. Ideal variants of QN-spaces were introduced in \cite{MR3038073} and studied in \cite{MR3622377,MR3784399,MR3924519,MR3423409,Repicky-1,Repicky-2}. For a given ideal $\I$, the cardinal number $\bb(\I)$ characterizes the smallest size of a space which is not $\I$QN (see~\cite{Repicky-1}).

The paper is organized as follows.

In Section~\ref{sec:preliminaries} we collect basics about ideals on $\omega$ and some known facts about the cardinal numbers $\bb(\I)$. In particular, we recall a very useful combinatorial characterization of $\bb(\I)$ from~\cite{nasza}, which we use almost exclusively in the rest of the paper.

In the remaining part of the paper we answer two natural questions concerning $\bb(\I)$. 

The first question is about possible values of $\bb(\I)$ for Borel ideals: in~\cite{nasza} it is shown that there are $\bf{\Sigma^0_2}$ ideals $\I$ with $\bb(\I)=\aleph_1$ provable in ZFC (\cite[Corollary~7.3 and Theorem~7.4]{nasza}) as well as $\bf{\Sigma^0_2}$ ideals $\I$ with $\bb(\I)=\bnumber$ provable in ZFC (\cite[Example~5.15]{nasza}). However, it was unknown whether $\aleph_1$ and $\bb$ in the above can be replaced by other cardinals. In Section~\ref{sec:toprep} we study $\bb(\I)$ in the case of ideals with topological representation (such ideals were introduced by Sabok and Zapletal in~\cite{SabokZapletal}) obtaining a $\bf{\Pi^0_3}$ ideal with $\bb(\I)=\add(\cM)$ provable in ZFC. 

The second question concerns comparison of $\bb(\I)$ with the dominating number $\dd$. In general, $\aleph_1\leq\bb(\I)\leq\cc$, for every ideal $\I$ (\cite[Theorem~4.2]{nasza}). Moreover, the upper bound can be improved to $\bb(\I)\leq\bb$ in the case of $\bf{\Pi^0_4}$ ideals $\I$ (\cite[Corollary~6.9]{nasza}; in fact, as shown in \cite[Theorem~6.8]{nasza}, this holds even for a larger class of all so-called co-analytic weak P-ideals). On the other hand, Canjar proved in~\cite{Canjar,Canjar2,CanjarPhD} that under $\dd=\cc$ there is always a maximal ideal $\I$ with $\bb(\I)=\cf(\dd)$ and that in the model obtained by adding $\lambda$ Cohen reals to a model of GCH, $\bb(\I)$ can be equal to any regular cardinal between $\aleph_1$ and $\lambda=\dd$, for some maximal ideal $\I$. However, it was unknown if it is consistent to have an ideal $\I$ with $\bb(\I)>\dd$. We show that the answer is affirmative (Section~\ref{sec:>d}), however $\bb(\I)\leq\dd$ for every analytic ideal $\I$ (Section~\ref{sec:analytic}). The latter result uses ideas developed by Debs and Saint Raymond in~\cite{Debs}, providing a new method of proving statements about analytic ideals.


\section{Preliminaries}
\label{sec:preliminaries}

By $\omega$ we denote the set of all natural numbers. We identify  a natural number $n$ with the set $\{0, 1,\dots , n-1\}$.
We write $A\subseteq^\star B$ if $A\setminus B$ is finite.
For a set $A$ and a finite or infinite cardinal number  $\kappa$, we write $[A]^{\kappa} =\{B\subseteq A: |B|=\kappa\}$ and $[A]^{<\kappa} =\{B\subseteq A: |B|<\kappa\}$. Moreover, by $\cf(\kappa)$ we denote the cofinality of $\kappa$.


\subsection{Ideals and \texorpdfstring{$\sigma$}{}-ideals}

An \emph{ideal on a set $X$} is a family $\I\subseteq\cP(X)$ satisfying the following properties:
\begin{enumerate}
\item[(i)] if $A,B\in \I$ then $A\cup B\in\I$;
\item[(ii)] if $A\subseteq B$ and $B\in\I$ then $A\in\I$;
\item[(iii)] $\I$ contains all finite subsets of $X$;
\item[(iv)] $X\notin\I$.
\end{enumerate}
Note that, thanks to item (iii), $\bigcup\I=X$ -- we will use this observation several times in Section \ref{sec:analytic}. A \emph{$\sigma$-ideal on $X$} is an ideal satisfying one additional property:
\begin{enumerate}
\item[(v)] if $\langle A_n:n\in\omega\rangle\in \I^\omega$ then $\bigcup_{n\in\omega}A_n\in\I$.
\end{enumerate}
Note that, unlike most Authors, in our paper every $\sigma$-ideal on $X$ already contains all countable subsets of $X$ and has to be a proper subset of $\cP(X)$ (i.e., cannot be equal to $\cP(X)$). In this paper, by an \emph{ideal} we mean ideal on some countable set, while all $\sigma$-ideals will be subsets of $\cP(2^\omega)$. 

The ideal of all finite subsets of $\omega$ is denoted by $\fin$. We say that an ideal $\I$ on $X$ is \emph{maximal} if $\I\subseteq\J$ implies $\I=\J$, for every ideal $\J$ on $X$.

If $\I$ and $\J$ are ideals on $X$ and $Y$, respectively, then we say that $\I$ and $\J$ are \emph{isomorphic}, if there is a bijection $f:Y\to X$ such that:
$$A\in\I\ \Leftrightarrow\ f^{-1}[A]\in\J,$$
for every $A\subseteq X$. It is easy to see that many properties of ideals are preserved under isomorphisms of ideals.

For $\cA\subseteq\cP(X)$ we write:
$$\I(\cA)=\left\{B\subseteq X: B\subseteq^\star \bigcup\cA'\text{ for some }\cA'\in[\cA]^{<\omega}\right\}.$$
If $X\notin\I(\cA)$, then $\I(\cA)$ is an ideal, which we call \emph{the ideal generated by $\cA$}. Note that in this case $\I(\cA)$ is the smallest ideal containing $\cA$.

By identifying subsets of a countable set $X$ with their characteristic functions, we can equip $\cP(X)$ with the topology of the Cantor space $\{0,1\}^X$ and therefore assign descriptive complexity to ideals on $X$. In particular, an ideal $\I$ is analytic if $\I$ is analytic as a subset of the space $\{0,1\}^X$. 

By $\cM$ ($\cN$) we denote the $\sigma$-ideals of meager (null, respectively) subsets of $2^\omega$. If $\cA\subseteq\cP(2^\omega)$ and $2^\omega$ cannot be covered by countably many members of $\cA$, then by $\sigma\cA$ we denote the \emph{$\sigma$-ideal generated by $\cA$}, i.e.: 
$$\sigma\cA=\left\{B\subseteq 2^\omega: \exists_{\langle A_n\rangle\in \cA^\omega}\, B\subseteq\bigcup_{n\in\omega}A_n\right\}.$$
For a $\sigma$-ideal $I$ on $2^\omega$ let $\overline{I}$ be the family of all compact subsets of $2^\omega$ belonging to $I$. We will say that \emph{$I$ is generated by compact sets} if $I=\sigma\overline{I}$. In particular, $[2^\omega]^{<\omega}$, $\cM$ and $\sigma\overline{\cN}$ (i.e., the $\sigma$-ideal on $2^\omega$ generated by compact null subsets of $2^\omega$) are generated by compact sets.

\subsection{Some cardinal invariants}

In our paper we will need the following cardinal invariants:
\begin{itemize}
\item the \emph{pseudointersection number}:
\begin{align*}
\pp =\min\left\{|\mathcal{A}|:\mathcal{A}\subseteq[\omega]^\omega\ \land \left(\forall_{\mathcal{A}_0\in[\mathcal{A}]^{<\omega}}\, \bigcap\mathcal{A}_0\neq\emptyset\right)\land\left( \forall_{S\in[\omega]^\omega}\, \exists_{\textit{A}\in\mathcal{A}}\, |S\setminus\textit{A}|=\omega\right)\right\};
\end{align*}
\item the \emph{additivity of the $\sigma$-ideal of meager subsets of $2^\omega$} is given by:
$$\add(\cM)=\min\left\{|\cA|:\cA\subseteq\cM\land\bigcup\cA\notin\cM\right\};$$
\item the \emph{bounding number} $\bb$, which is the smallest size of an \emph{$\leq_\fin$-unbounded subset of $\omega^\omega$}, that is:
$$\bb = \min\left\{|\cF|: \cF\subseteq\omega^\omega\land \neg(\exists_{g\in \omega^\omega}\, \forall_{f\in \cF}\, f\leq_\fin g\right\};$$
\item the \emph{dominating number} $\dd$, which is the smallest size of a \emph{$\leq_\fin$-dominating subset of $\omega^\omega$}, that is:
$$\dd = \min\left\{|\cF|: \cF\subseteq\omega^\omega\land \forall_{g\in \omega^\omega}\, \exists_{f\in \cF}\, g\leq_\fin f\right\};$$
\item the \emph{ultrafilter number} $\uu$, which is the smallest size of a family generating a maximal ideal.
\end{itemize}

It is known that: 
$$\aleph_1\leq\pp\leq\add(\cM)\leq\bb\leq\dd\leq\cc$$ 
(see \cite[Subsection 9.2]{MR2778559}). Moreover, $\bb\leq\uu\leq\cc$, however both $\dd<\uu$ and $\uu\leq\dd$ are consistent (see \cite{MR2768685}).

The cardinal $\add(\cM)$ is a particular case of a more general invariant: if $I$ is a $\sigma$-ideal on $2^\omega$, then:
$$\add(I)=\min\left\{|\cA|:\cA\subseteq I\land\bigcup\cA\notin I\right\}.$$
Note that $\add(I)\geq\aleph_1$. Moreover, if $I$ is generated by compact sets, then:
$$\add(I)=\min\left\{|\cA|:\cA\subseteq \overline{I}\land\bigcup\cA\notin I\right\}.$$

\subsection{A characterization of \texorpdfstring{$\bb(\I)$}{}}

Let $\I$ be an ideal on a set $X$.
By $ \widehat{\cP}_\I$ we will denote the family of all sequences $\langle A_n : n\in\omega\rangle\in [\cP(X)]^\omega$ such that $A_n\in \I$ for all $n\in\omega$ and $A_n\cap A_k=\emptyset$ whenever $n\neq k$. By $ \cP_\I$ we will denote the family of all sequences $\langle A_n : n\in\omega\rangle \in \widehat{\cP}_\I$ such that $\bigcup\{A_n: n\in \omega\} = X$.

By \cite[Theorem 3.10]{nasza}, the studied cardinal number $\bb(\I)$ has a useful combinatorial characterization, which we will use almost exclusively in the rest of the paper without any reference:
$$\bb(\I) = \min \left\{|\cE|:\cE\subseteq\widehat{\cP}_\I \land \forall_{\langle A_n\rangle\in\cP_\I}\, \exists_{\langle E_n\rangle\in\cE} \, \bigcup_{n\in\omega}\left(A_n\cap \bigcup_{i\leq n}E_i\right)\notin\I\right\}.$$

The following immediate observation will simplify some of our considerations. 

\begin{remark}
\label{rem-zawieranie}
If $\langle B_n:n\in\omega\rangle$ is a partition of a set $X$ (that is $\bigcup_{n\in\omega}B_n=X$ and $B_n\cap B_m=\emptyset$ for all $n\neq m$) and $\langle A_i:i\in\omega\rangle$ is any sequence of subsets of $X$, then:
$$\bigcup_{i\in\omega}\left(A_i\setminus \bigcup_{n<i}B_n\right)=\bigcup_{n\in\omega}\left(B_n\cap\bigcup_{i\leq n}A_i\right)$$ 
\end{remark}


\section{Ideals with topological representation}
\label{sec:toprep}

In this Section we will deal with ideals having a topological representation in the sense of Sabok and Zapletal (see \cite{SabokZapletal}).

Let $X$ be a separable metrizable space with a countable dense set $D$ and $I$ be a $\sigma$-ideal on $X$ (recall that in our paper every $\sigma$-ideal on $X$ contains all singletons and is a proper subset of $\cP(X)$). Following \cite{SabokZapletal}, we define an ideal on $D$ by: 
$$\J_I=\{A\subseteq D: \cl_X(A)\in I\}$$
(here $\cl_X(A)$ denotes the closure of the set $A$ in the space $X$). We say that an ideal \emph{has a topological representation} if it is isomorphic to some $\J_I$ as above. In such case, we say that it is \emph{represented on $X$ by $I$}.

Note that $\J_I=\J_{\sigma\overline{I}}$, since $\J_I$ depends only on closed members of $I$. By \cite[Proposition 2.1]{KwelaSabok}, two ideals represented by the same $\sigma$-ideal $I$, but defined on different countable dense subsets of $X$, are isomorphic. Moreover, each ideal with topological representation can be represented on the Cantor space $2^\omega$ by a $\sigma$-ideal generated by some family of compact nowhere dense sets (\cite[Corollary 1.3]{KwelaSabok}). Finally, every analytic ideal with topological representation is ${\bf{\Pi^0_3}}$, but not ${\bf{\Sigma^0_2}}$ (\cite[Theorem 1.4]{KwelaSabok}).

The basic examples of ideals having topological representation are $\nwd=\J_\cM$, $\mz=\J_{\sigma\overline{\cN}}$ and $\ctbl=\J_{[2^\omega]^{\leq\omega}}$. First two of them were introduces by Farah and Solecki in \cite{FarahSolecki}.

Before calculating the invariant $\bb(\I)$ for ideals with topological representation, we need to introduce two notions.

\begin{definition}
For a $\sigma$-ideal $I$ on $2^\omega$ we define: 
$$\add'(I)=\min\left\{|\cA|:\ \cA\subseteq\overline{I}\land\forall_{\langle B_n\rangle\in\overline{I}^\omega}\, \exists_{A\in\cA}\, \forall_{n\in\omega}\, A\setminus B_n\neq\emptyset\right\}.$$
\end{definition}

The coefficient $\add'(I)$ has been studied implicitly by Fremlin in \cite[Definition 22A]{Fremlin} and by Kankaanp\"{a}\"{a} in \cite{MR3032424}. Note that $\add'(I)=\add'(\sigma\overline{I})$, as $\add'(I)$ depends only on compact members of $I$.

\begin{proposition}
\label{prop-toprep}
Let $I$ be a $\sigma$-ideal on $2^\omega$ generated by some family of compact sets. Then
$$\aleph_1\leq\add'(I)\leq\add(I).$$
\end{proposition}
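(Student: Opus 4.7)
The plan is to prove each of the two inequalities separately, using the alternative characterization of $\add(I)$ for $\sigma$-ideals generated by compact sets that was recalled in the Preliminaries.

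For the lower bound $\aleph_1 \leq \add'(I)$, I would show directly that no countable family $\cA \subseteq \overline{I}$ can witness $\add'(I)$. Given a countable $\cA = \{A_m : m \in \omega\}$, the obvious move is to let $\langle B_n : n \in \omega \rangle$ be the sequence $B_n = A_n$. This sequence lies in $\overline{I}^\omega$, and for each $A_m \in \cA$ we have $A_m \setminus B_m = \emptyset$, so the defining property of $\add'(I)$ fails for this $\cA$. Hence every witness must be uncountable.

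For the upper bound $\add'(I) \leq \add(I)$, I would invoke the equality
\[
\add(I) = \min\{|\cA| : \cA \subseteq \overline{I} \land \bigcup \cA \notin I\}
\]
valid when $I$ is generated by compact sets, and choose $\cA \subseteq \overline{I}$ of minimal size with $\bigcup \cA \notin I$. I claim the same $\cA$ witnesses $\add'(I)$. Suppose otherwise: then there exists $\langle B_n \rangle \in \overline{I}^\omega$ such that for every $A \in \cA$ some $n_A \in \omega$ satisfies $A \setminus B_{n_A} = \emptyset$, i.e., $A \subseteq B_{n_A}$. Consequently $\bigcup \cA \subseteq \bigcup_{n \in \omega} B_n$, and the latter is a countable union of members of $I$, so it lies in $I$. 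This contradicts $\bigcup \cA \notin I$.

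There is essentially no deep obstacle here; the argument is a direct unpacking of the definitions, and the only point requiring care is to use the right characterization of $\add(I)$ (the one restricted to compact witnesses, available precisely because $I$ is generated by compact sets). The lower bound relies only on the fact that $\omega$ itself is an index set for the sequence $\langle B_n \rangle$, whereas the upper bound uses countable additivity of $I$ together with the compactness-based formula for $\add(I)$.
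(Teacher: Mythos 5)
Your proof is correct and follows essentially the same route as the paper's: the lower bound is obtained by diagonalizing against a countable family via the sequence $B_n=A_n$, and the upper bound uses the compact-generator characterization of $\add(I)$ together with $\sigma$-additivity of $I$. The only cosmetic difference is that you phrase the upper bound as a contradiction ($\bigcup\cA\subseteq\bigcup_n B_n\in I$), whereas the paper rewrites the minimum defining $\add(I)$ through a short chain of equalities and then observes that any witness for it is a witness for $\add'(I)$; these are logically the same argument.
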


\begin{proof}
To show that $\aleph_1\leq\add'(I)$, fix any countable $\cA\subseteq\overline{I}$. Then $\cA=\{B_n: n\in\omega\}$. Consider now the sequence $\langle B_n:n\in\omega\rangle\in\overline{I}^\omega$. Clearly, for each $A\in\cA$ there is $n\in\omega$ with $A=B_n$, so $A\setminus B_n=\emptyset$. Thus, $\aleph_1\leq\add'(I)$.

Now we show that $\add'(I)\leq\add(I)$. Since $I$ is generated by some family of compact sets, we have:
\begin{align*}
    \add(I) & =\min\left\{|\cA|:\ \cA\subseteq I\land\bigcup\cA\notin I\right\}\\
    & =\min\left\{|\cA|:\ \cA\subseteq \overline{I}\land\bigcup\cA\notin I\right\}\\
    & =\min\left\{|\cA|:\ \cA\subseteq\overline{I}\land\forall_{\langle B_n\rangle\in\overline{I}^\omega}\, \bigcup\cA\setminus\bigcup_n B_n\neq\emptyset\right\}\\
    & =\min\left\{|\cA|:\ \cA\subseteq\overline{I}\land\forall_{\langle B_n\rangle\in\overline{I}^\omega}\, \exists_{A\in\cA}\, A\setminus \bigcup_n B_n\neq\emptyset\right\}.
\end{align*}
Now it is easy to see that each family $\cA\subseteq\overline{I}$ witnessing $\add(I)$ is also a witness for $\add'(I)$. Hence, $\add'(I)\leq\add(I)$.
\end{proof}

\begin{proposition}
\label{add'}\
\begin{itemize}
\item[(a)] $\add'(\cM)=\add(\cM)$;
\item[(b)] $\pp\leq\add'(\cN)=\add'(\sigma\overline{\cN})\leq\add(\cM)$;
\item[(c)] $\add'([2^\omega]^{\leq\omega})=\aleph_1$.
\end{itemize}    
\end{proposition}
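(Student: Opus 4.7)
For (c), the lower bound $\aleph_1\leq\add'([2^\omega]^{\leq\omega})$ is Proposition \ref{prop-toprep}. For the matching upper bound I would fix, for each $\xi<\omega_1$, a countable compact $A_\xi\subseteq 2^\omega$ of Cantor--Bendixson rank at least $\xi$ (easily constructed inductively inside $2^\omega$). Given any countable sequence $\langle B_n\rangle$ of countable compact sets, the ranks $\CB(B_n)$ are countably many countable ordinals, hence bounded by some $\beta<\omega_1$; then for $\xi>\beta$, $A_\xi\subseteq B_n$ would force $\CB(A_\xi)\leq\CB(B_n)\leq\beta$, contradicting $\CB(A_\xi)\geq\xi$. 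Thus $\{A_\xi:\xi<\omega_1\}$ witnesses $\add'([2^\omega]^{\leq\omega})\leq\aleph_1$.

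For (b), the middle equality $\add'(\cN)=\add'(\sigma\overline{\cN})$ is immediate from the paper's observation that $\add'$ depends only on the compact members of the ideal. For $\pp\leq\add'(\cN)$, given $\cA\subseteq\overline{\cN}$ with $|\cA|<\pp$, I would apply Bell's theorem (MA for $\sigma$-centered forcings holds below $\pp$) to the forcing whose conditions are pairs $(\vec W,F)$, with $\vec W=(W_0,\dots,W_{k-1})$ a finite sequence of clopens satisfying $\mu(W_i)<2^{-i}$ and $F\in[\cA]^{<\omega}$, ordered so that extending $\vec W$ by further clopens requires every currently committed $A\in F$ to be contained in each new clopen. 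The forcing is $\sigma$-centered (conditions sharing $\vec W$ are pairwise compatible via $(\vec W,F_1\cup F_2)$), the sets $D_A=\{(\vec W,F):A\in F\}$ are trivially dense, and density of the length-increasing sets uses that any finite union of compact nulls sits inside a clopen of arbitrarily small measure. The generic then yields clopens $(W_n)$ with $\mu(W_n)<2^{-n}$ such that every $A\in\cA$ is eventually in $W_n$, so the sets $B_n:=\bigcap_{m\geq n}W_m$ are closed nulls providing the required $\subseteq$-cover. For $\add'(\sigma\overline{\cN})\leq\add(\cM)$, Proposition \ref{prop-toprep} reduces it to $\add(\sigma\overline{\cN})\leq\add(\cM)$, which follows from the classical identity $\add(\sigma\overline{\cN})=\add(\cN)$ together with $\add(\cN)\leq\add(\cM)$ from Cicho\'{n}'s diagram.

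Part (a) is the main challenge. The direction $\add'(\cM)\leq\add(\cM)$ is Proposition \ref{prop-toprep}. For the reverse $\add(\cM)\leq\add'(\cM)$, my plan is to exploit the decomposition $\add(\cM)=\min\{\bb,\mathrm{cov}(\cM)\}$ and to encode each closed nowhere dense $A\subseteq 2^\omega$ by its escape profile $f_A(n)=\max_{s\in 2^n}\min\{|t|-n:t\supseteq s,\,[t]\cap A=\emptyset\}$, which is finite by nowhere-denseness. For $\cA\subseteq\overline{\cM}$ of size $<\add(\cM)$, the family $\{f_A:A\in\cA\}$ is dominated by some $g\in\omega^\omega$ (using $\bb$), and the remaining task is to select countably many escape patterns $\pi_k:2^{<\omega}\to 2^{<\omega}$ with $\pi_k(s)\supseteq s$ and $|\pi_k(s)|\leq|s|+g(|s|)$ such that every $A\in\cA$ admits some $\pi_k$ satisfying $\pi_k(s)\notin T_A$ for all $s\in T_A$; the associated closed nowhere dense sets $B_k=\{x\in 2^\omega:\forall s\sqsubseteq x,\ \pi_k(s)\not\sqsubseteq x\}$ then deliver the $\subseteq$-cover. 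The main obstacle is this countable selection of patterns, which is where the $\mathrm{cov}(\cM)$-part of the hypothesis enters; I expect to handle it either by a Baire-category argument in the compact product space $\prod_{s\in 2^{<\omega}}\{t\supseteq s:|t|\leq|s|+g(|s|)\}$, or by invoking the selection frameworks of Fremlin \cite[Definition 22A]{Fremlin} and Kankaanp\"{a}\"{a} \cite{MR3032424}.
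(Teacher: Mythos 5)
Your part (c) argument is correct and is a nice direct construction of a witness of size $\aleph_1$ via Cantor--Bendixson rank, where the paper instead simply invokes Proposition~\ref{prop-toprep} together with $\add([2^\omega]^{\leq\omega})=\omega_1$. Your forcing proof of $\pp\leq\add'(\cN)$ also appears to be correct as outlined (it is $\sigma$-centered, the dense sets are right, and the sets $B_n=\bigcap_{m\geq n}W_m$ are compact null with every $A\in\cA$ eventually contained in them), and gives a self-contained argument where the paper cites \cite[Theorem~22G]{Fremlin}. The middle equality in (b) is handled the same way as in the paper.

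However, there are two genuine problems. First, in (b) you justify $\add(\sigma\overline{\cN})\leq\add(\cM)$ by the ``classical identity $\add(\sigma\overline{\cN})=\add(\cN)$,'' which is false: since $\add(\cN)<\add(\cM)$ is consistent and $\add(\sigma\overline{\cN})=\add(\cM)$ holds in ZFC by the Bartoszy\'{n}ski--Shelah theorem (this is exactly what the paper cites from \cite[Theorem~3.1]{MR1186905}), the identity $\add(\sigma\overline{\cN})=\add(\cN)$ cannot hold. Your conclusion $\add(\sigma\overline{\cN})\leq\add(\cM)$ happens to be true, but only because the correct statement is that these two quantities are \emph{equal}; the lemma you invoke does not exist, and the detour through $\add(\cN)\leq\add(\cM)$ from Cicho\'{n}'s diagram is unnecessary once the correct Bartoszy\'{n}ski--Shelah equality is in hand. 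Second, and more seriously, your proof of $\add(\cM)\leq\add'(\cM)$ in part (a) is not a proof at all: you lay out an encoding via ``escape profiles'' and then explicitly say that the essential step (the countable selection of escape patterns, which is where $\mathrm{cov}(\cM)$ is supposed to enter) is something you ``expect to handle'' by one of two approaches without actually carrying either out. The paper deals with this direction by citing \cite[Lemma~3.5]{MR3032424}; if you want a self-contained argument you must actually execute the selection step, and as written the hardest part of (a) is left open.
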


\begin{proof}
(a): The inequality $\add'(\cM)\leq\add(\cM)$ follows from Proposition \ref{prop-toprep}, while $\add'(\cM)\geq\add(\cM)$ is \cite[Lemma 3.5]{MR3032424}.

(b): Since $\add'(I)$ depends only on compact members of $I$, $\add'(\cN)=\add'(\sigma\overline{\cN})$. From Proposition \ref{prop-toprep} and \cite[Theorem 3.1]{MR1186905} we get $\add'(\sigma\overline{\cN})\leq\add(\sigma\overline{\cN})=\add(\cM)$, while $\pp\leq\add'(\cN)$ is shown in \cite[Theorem 22G]{Fremlin}.

(c): It follows from Proposition \ref{prop-toprep} and the fact that $\add([2^\omega]^{\leq\omega})=\omega_1$.
\end{proof}

\begin{definition}
\label{def-invariant-over-basic}
We say that a $\sigma$-ideal $I$ on $2^\omega$ is invariant over basic sets if given any $s\in 2^{<\omega}$ we have:
$$A\in I\ \Leftrightarrow\ f_s[A]\in I,$$
where $f_s:2^\omega\to V_s$ is given by $f_s(x)=s^\frown x$ and $V_s=\{x\in 2^\omega: x|\lh(s)=s\}$ is the basic clopen set associated to $s$ (here $\lh(s)$ is the length of $s$).
\end{definition}

\begin{proposition}
\label{invariantoverbasic}
The $\sigma$-ideals $\cM$, $\cN$, $\sigma\overline{\cN}$ and $[2^\omega]^{<\omega}$ are invariant over basic sets.     
\end{proposition}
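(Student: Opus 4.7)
The key observation is that $f_s$ is a homeomorphism of $2^\omega$ onto the clopen set $V_s$, and that $V_s$ has Lebesgue measure $2^{-\lh(s)}$. All four claims will follow from standard transport arguments combined with these two facts.

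First I would dispatch $[2^\omega]^{<\omega}$ (which, by the paper's convention, denotes the $\sigma$-ideal of countable subsets of $2^\omega$): since $f_s$ is a bijection, it preserves cardinality, so $A$ is countable iff $f_s[A]$ is countable. Next, for $\cM$: the crucial reduction is that, because $V_s$ is clopen, a subset $C \subseteq V_s$ is nowhere dense in $V_s$ iff it is nowhere dense in $2^\omega$ (the closure of $C$ in $2^\omega$ coincides with its closure in $V_s$ since $V_s$ is closed, and the interior operations agree since $V_s$ is open). Combined with the fact that $f_s$ is a homeomorphism onto $V_s$ and hence preserves nowhere dense sets between $2^\omega$ and $V_s$, we get that $A$ is a countable union of nowhere dense sets in $2^\omega$ iff $f_s[A]$ is, which is the required equivalence for meagerness.

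For $\cN$ I would use the fact that if $\mu$ denotes the standard product (Haar) measure on $2^\omega$, then $f_s$ pushes $\mu$ forward to $2^{\lh(s)}\mu\restriction V_s$, i.e.\ $\mu(f_s[B]) = 2^{-\lh(s)}\mu(B)$ for every Borel $B \subseteq 2^\omega$; passing to outer measure, $A$ is null in $2^\omega$ iff $f_s[A]$ is null in $V_s$ iff (since $V_s$ is a Borel subset of $2^\omega$) $f_s[A]$ is null in $2^\omega$. For $\sigma\overline{\cN}$ the argument combines the two previous ones: if $A \subseteq \bigcup_n K_n$ with each $K_n$ compact and null, then $f_s[A] \subseteq \bigcup_n f_s[K_n]$ with each $f_s[K_n]$ compact (continuous image of compact) and null (by the $\cN$-case). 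Conversely, if $f_s[A] \subseteq \bigcup_n K_n$ with $K_n$ compact and null, replace $K_n$ by $K_n \cap V_s$ (still compact and null because $V_s$ is clopen), and then $A \subseteq \bigcup_n f_s^{-1}[K_n \cap V_s]$ exhibits $A$ as a subset of a countable union of compact null sets, since $f_s^{-1}$ restricted to $V_s$ is a homeomorphism.

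There is no real obstacle here; the only subtle point worth highlighting in writing is the equivalence between nowhere-denseness (resp.\ nullness) in $V_s$ and in $2^\omega$ for subsets of $V_s$, which rests entirely on $V_s$ being clopen. Once that is spelled out once, the four verifications proceed uniformly.
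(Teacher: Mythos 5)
Your proof is correct and follows essentially the same approach as the paper's: cardinality preservation for $[2^\omega]^{<\omega}$, the homeomorphism property of $f_s$ for $\cM$, and measure scaling by $2^{-\lh(s)}$ for $\cN$ and $\sigma\overline{\cN}$. Your write-up is more detailed than the paper's one-line verifications --- in particular you spell out the clopenness of $V_s$ (to pass between nowhere-denseness/nullness in $V_s$ and in $2^\omega$) and the intersection $K_n \cap V_s$ step for $\sigma\overline{\cN}$ --- but the underlying argument is the same.
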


\begin{proof}
In the case of $[2^\omega]^{<\omega}$, it suffices to observe that each $f_s$ is a bijection (so $|A|=|f_s[A]|$). Since each $f_s$ is also a homeomorphism, $\cM$ is invariant over basic sets. Finally, since for each measurable $A\subseteq 2^\omega$ the measure of $f_s[A]$ is equal to the measure of $A$ multiplied by $\frac{1}{2^{\lh(s)}}$, $\cN$ and $\sigma\overline{\cN}$ are invariant over basic sets.
\end{proof}

Finally, we are ready to calculate the invariants $\bb(\I)$ in the case of ideals with topological representation. 

\begin{theorem}
\label{thm-toprep}
Let $I$ be an invariant over basic sets $\sigma$-ideal on $2^\omega$ generated by some family of compact sets. Then:  
$$\min\{\add'(I),\bb\}\leq\bb(\J_{I})\leq\add'(I).$$
Moreover, if $\J_I$ is analytic, then: 
$$\bb(\J_{I})=\min\{\add'(I),\bb\}.$$
\end{theorem}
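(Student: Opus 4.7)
The plan is to prove the two inequalities $\min\{\add'(I),\bb\}\leq\bb(\J_I)$ and $\bb(\J_I)\leq\add'(I)$ separately; the \emph{moreover} clause then follows by combining them with $\bb(\J)\leq\bb$ for analytic ideals with topological representation (such ideals are $\bPi^0_3\subseteq\bPi^0_4$, so \cite[Corollary 6.9]{nasza} applies). Throughout I shall work via the combinatorial characterization of $\bb(\I)$ recalled after Remark~\ref{rem-zawieranie}, and I shall fix a convenient representation: by \cite[Proposition 2.1]{KwelaSabok} I may assume $\J_I$ lives on a countable dense set $D\subseteq 2^\omega$ that is \emph{self-similar} under the shifts $f_s$ (so $f_s[D]=D\cap V_s$), e.g.\ the eventually-zero sequences.

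For the lower bound, let $\cE=\{\langle E_n^\alpha\rangle:\alpha<\kappa\}\subseteq\widehat{\cP}_{\J_I}$ with $\kappa<\min\{\add'(I),\bb\}$. The family $\{\overline{E_n^\alpha}:\alpha<\kappa,\,n\in\omega\}\cup\{\{d\}:d\in D\}\subseteq\overline{I}$ has size $<\add'(I)$, so pick $\langle B_m\rangle\in\overline{I}^\omega$ with $D\subseteq\bigcup_m B_m$ and some $m(\alpha,n)$ with $\overline{E_n^\alpha}\subseteq B_{m(\alpha,n)}$. Using $\kappa<\bb$, pick $h\in\omega^\omega$ eventually dominating each $i\mapsto m(\alpha,i)$. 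Set $\widetilde B_k=\bigcup_{\ell\leq k}B_\ell$ and $A_n=D\cap(\widetilde B_{h(n)}\setminus\widetilde B_{h(n-1)})$; then $\langle A_n\rangle\in\cP_{\J_I}$, and $\bigcup_{n<i}A_n\supseteq E_i^\alpha$ for almost all $i$, so $\bigcup_n(A_n\cap\bigcup_{i\leq n}E_i^\alpha)$ is a finite union of sets in $\J_I$ and hence lies in $\J_I$.

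For the upper bound, let $\cA\subseteq\overline{I}$ of size $\add'(I)$ witness $\add'(I)$. First thicken each $A\in\cA$ by adjoining a countable set of $D$-points converging to a countable dense subset of $A$; the resulting $A^*$ is still compact and in $I$ (since $I$ is a $\sigma$-ideal) and satisfies $\overline{A^*\cap D}=A^*$, while the thickened family still witnesses $\add'(I)$. Fix a dense antichain $\{s_n\}\subseteq 2^{<\omega}$ (pairwise incomparable, with every $t\in 2^{<\omega}$ extended by some $s_n$) and put $E_n^A=D\cap f_{s_n}[A^*]$; invariance over basic sets together with self-similarity give $\overline{E_n^A}=f_{s_n}[A^*]\in I$, so $\langle E_n^A\rangle\in\widehat{\cP}_{\J_I}$ and the resulting $\cE$ has size $\add'(I)$. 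Given $\langle A_n\rangle\in\cP_{\J_I}$, the sequence $B_i=\bigcup_{n<i}f_{s_i}^{-1}[\overline{A_n}\cap V_{s_i}]$ lies in $\overline{I}^\omega$ (again by invariance), so some $A^*\in\cA$ satisfies $A^*\setminus B_i\neq\emptyset$ for every $i$. Density of $A^*\cap D$ in $A^*$ combined with closedness of $B_i$ yields $a_i\in A^*\cap D\setminus B_i$, whence $f_{s_i}(a_i)\in E_i^A\setminus\bigcup_{n<i}A_n$. Since $\{s_n\}$ is dense, the target $\bigcup_i(E_i^A\setminus\bigcup_{n<i}A_n)$ is dense in $2^\omega$, and so its closure $2^\omega$ does not belong to $I$.

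The main obstacle, in my view, is the upper bound, and specifically guaranteeing that the target set has closure outside $I$: a naive enumeration with $\{s_n\}$ clustering at one branch (e.g.\ $s_n=1^n0$) already fails, because then $\bigcup_n E_n^A$ has closure $\bigcup_n f_{s_n}[A^*]\cup\{1^\omega\}\in I$ and cannot defeat any partition. The interaction of invariance over basic sets, self-similarity of $D$, the thickening of $\cA$, a genuinely dense antichain, and $\add'(I)$ applied to the cleverly chosen sequence $\langle B_i\rangle$ is what makes the argument go through. The lower bound, by contrast, is routine book-keeping once the rescaling by a dominating $h$ is in place.
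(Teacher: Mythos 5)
Your proof is correct and follows essentially the same route as the paper's: the lower bound via $\add'(I)$ plus a dominating reparametrization, the upper bound via thickening $\cA$ so that $\overline{A^*\cap D}=A^*$, translating each $A^*$ into the basic clopens by $f_s$, and using $\add'(I)$ against a sequence of pullbacks of $\bigcup_{n<i}\overline{A_n}$. The one genuine (if small) improvement in your write-up is the choice of a \emph{dense antichain} $\{s_n\}$ rather than a bijection $h\colon\omega\to 2^{<\omega}$: since the $V_{s_n}$ are pairwise disjoint, the sets $E_n^A=D\cap f_{s_n}[A^*]$ are already disjoint, so you can skip the disjointification $\hat E^A_n=E^A_n\setminus\bigcup_{i<n}E^A_i$ that the paper carries out, and density of the antichain still makes $\bigcup_i\bigl(E_i^A\setminus\bigcup_{n<i}A_n\bigr)$ dense in $2^\omega$. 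Two details worth making precise in a full write-up: in the lower bound there is an off-by-one (you need $h(i-1)\geq m(\alpha,i)$, not $h(i)\geq m(\alpha,i)$), which is handled by dominating the shifted functions; and in the thickening step you should, as in the paper, force the adjoined $D$-points near $e_i$ to lie within radius $2^{-i}$ of $e_i$, so that $A^*$ is actually closed — merely saying the points ``converge to a countable dense subset of $A$'' does not by itself guarantee closedness.
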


\begin{proof}
The case of analytic ideals will follow from the general case. Indeed, each analytic ideal with topological representation $\J_{I}$ is ${\bf{\Pi^0_3}}$ (by \cite[Theorem 1.4]{KwelaSabok}), so using \cite[Corollary 6.9]{nasza} we get $\bb(\J_{I})\leq\bb$. 

By \cite[Proposition 2.1]{KwelaSabok}, without loss of generality we may assume that $\J_I$ is an ideal on $D=\{x\in 2^\omega: \exists_{k\in\omega}\, \forall_{n\geq k}\, x(n)=0\}$. Note that $f_s[D]=D\cap V_s$, where $V_s$ and $f_s$ are as in Definition \ref{def-invariant-over-basic}.  

We start with $\bb(\J_{I})\geq\min\{\add'(I),\bb\}$. Fix any $\kappa<\min\{\add'(I),\bb\}$. We need to show that $\kappa<\bb(\J_{I})$. Let $\{\langle E^\alpha_n:n\in\omega\rangle: \alpha<\kappa\}\subseteq \widehat{\cP}_{\J_I}$ be arbitrary. Since $\omega\cdot\kappa<\add'(I)$ (by Proposition \ref{prop-toprep}), there is $\langle B_n:n\in\omega\rangle\in\overline{I}^\omega$ such that for each $n\in\omega$ and $\alpha<\kappa$ there exists $f_\alpha(n)\in\omega$ with $\cl_{2^\omega}(E^\alpha_n)\subseteq B_{f_\alpha(n)}$. 

Since $\kappa<\bb$, there exists $g\in\omega^\omega$ such that $f_\alpha\leq^\star g$, for all $\alpha<\kappa$. Without loss of generality, we may assume that $g$ is strictly increasing.

Let $\{q_n:\ n\in\omega\}$ be an enumeration of $D$ and define $C_0=\bigcup_{i\leq g(1)}(B_i\cap D)\cup\{q_0\}$ and $C_{n}=(\bigcup_{i\leq g(n+1)}(B_{i}\cap D)\cup\{q_{n}\})\setminus\bigcup_{i<n}C_i$. Then $\langle C_n: n\in\omega\rangle\in\cP_{\J_I}$, since: 
$$\cl_{2^\omega}(C_n)\subseteq \cl_{2^\omega}\left(\bigcup_{i\leq g(n+1)}B_i\cup\{q_n\}\right)=\bigcup_{i\leq g(n+1)}B_i\cup\{q_n\}\in I.$$

Fix now $\alpha<\kappa$. We need to show that $\bigcup_{n\in\omega}\left(C_n\cap\bigcup_{i\leq n}E^\alpha_i\right)\in\J_I$.

Let $k_\alpha\in\omega$ be such that $f_{\alpha}(m)\leq g(m)$, for every $m>k_\alpha$. Observe that for each $m>k_\alpha$ we get:
$$E^\alpha_m\subseteq \cl_{2^\omega}(E^\alpha_m)\cap D\subseteq B_{f_\alpha(m)}\cap D\subseteq \bigcup_{n\leq g(m)}B_n\cap D\subseteq\bigcup_{n\leq m-1}C_n.$$
and consequently: 
$$E^\alpha_m\cap\bigcup_{n\in\omega}\left(C_n\cap\bigcup_{i\leq n}E^\alpha_i\right)
\subseteq E^\alpha_m\cap\bigcup_{n\geq m}C_n=\emptyset,$$
since $\langle E^\alpha_n:n\in\omega\rangle\in\widehat{\cP}_{\J_I}$.
Hence, 
$$\bigcup_{n\in\omega}\left(C_n\cap\bigcup_{i\leq n}E^\alpha_i\right)\subseteq\left(\bigcup_{m\in\omega}E^\alpha_m\right)\cap\left(\bigcup_{n\in\omega}\left(C_n\cap\bigcup_{i\leq n}E^\alpha_i\right)\right)\subseteq\bigcup_{m\leq k_\alpha}E^\alpha_m\in\J_I.$$

Now we move to $\bb(\J_I)\leq\add'(I)$. Let $\cA\subseteq\overline{I}$ be the family of cardinality $\add'(I)$ such that for each $\langle B_n:n\in\omega\rangle\in\overline{I}^\omega$ one can find $A\in\cA$ with $A\setminus B_n\neq\emptyset$, for all $n$. 

Without loss of generality we may assume that $\cl_{2^\omega}(A\cap D)=A$ for each $A\in\cA$. Indeed, given $A\in\cA$, fix any countable $E\subseteq A$ such that $\cl_{2^\omega}(E)=A$ (this is possible as $A$ is a closed subset of $2^\omega$) and enumerate $E=\{e_i: i\in\omega\}$. For each $i\in\omega$ find a sequence $\langle x_{i,j}:j\in\omega\rangle\in [D\cap B(e_i,\frac{1}{2^i})]^\omega$ converging to $e_i$ (here $B(e_i,\frac{1}{2^i})$ denotes the open ball in $2^\omega$ of radius $\frac{1}{2^i}$ centered at $e_i$). By defining $\hat{A}=A\cup\{x_{i,j}: i,j\in\omega\}$ we get a closed set such that $\hat{A}=\cl_{2^\omega}(\hat{A}\cap D)$. Moreover, as $A\subseteq\hat{A}$, the family $\{\hat{A}: A\in\cA\}$ satisfies the same property as $\cA$. Thus, we will assume from now on that $\cl_{2^\omega}(A\cap D)=A$, for every $A\in \cA$.

Fix any bijection $h:\omega\to 2^{<\omega}$ and define $E^A_n=f_{h(n)}[A\cap D]$ for each $n\in\omega$ and $A\in\cA$. Observe that:
$$\cl_{2^\omega}(E^A_n)=\cl_{2^\omega}(f_{h(n)}[A\cap D])\subseteq f_{h(n)}[\cl_{2^\omega}(A\cap D)]=f_{h(n)}[A]\in I,$$
since $f_{h(n)}$ is a homeomorphism and $I$ is invariant over basic sets. Hence, each $E^A_n$ belongs to $\J_I$. We claim that the family $\{\langle \hat{E}^A_n: n\in\omega\rangle:A\in\cA\}\subseteq\widehat{\cP}_{\J_I}$, where $\hat{E}^A_n=E^A_n\setminus\bigcup_{i<n}E^A_i$ for all $n\in\omega$ and $A\in\cA$, witnesses $\bb(\J_I)\leq\add'(I)$. 

Fix any partition $\langle B_n:n\in\omega\rangle\in\cP_{\J_I}$ of $D$. 

Note that given any $n\in\omega$, we have $V_{h(n)}\cap\bigcup_{i<n}\cl_{2^\omega}(B_i)\subseteq \bigcup_{i<n}\cl_{2^\omega}(B_i)\in I$. Since $I$ is invariant over basic sets and $f_{h(n)}$ is a bijection, the set $f_{h(n)}^{-1}[V_{h(n)}\cap\bigcup_{i<n}\cl_{2^\omega}(B_i)]$ belongs to $\overline{I}$. Hence, there is $A\in\cA$ such that: 
$$A\setminus \left(f_{h(n)}^{-1}\left[V_{h(n)}\cap\bigcup_{i<n}\cl_{2^\omega}(B_i)\right]\right)\neq\emptyset$$
for all $n\in\omega$. To finish the proof, we will show that $B=\bigcup_{n\in\omega}\left(B_n\cap\bigcup_{i\leq n}\hat{E}^A_n\right)$ is dense in $2^\omega$ (hence, $\cl_{2^\omega}(B)=2^\omega\notin I$ and $B\notin\J_I$). 

Fix any $s\in 2^{<\omega}$. We need to find an element of $B$ belonging to $V_s$. Since
$$\cl_{2^\omega}(A\cap D)\setminus f_s^{-1}\left[V_s\cap\bigcup_{i<h^{-1}(s)}\cl_{2^\omega}(B_i)\right]=A\setminus \left(f_{s}^{-1}\left[V_s\cap\bigcup_{i<h^{-1}(s)}\cl_{2^\omega}(B_i)\right]\right)\neq\emptyset,$$
and $f_s^{-1}\left[V_s\cap\bigcup_{i<h^{-1}(s)}\cl_{2^\omega}(B_i)\right]$ is closed (as $f_s$ is a homeomorphism), we can find $x_s\in (A\cap D)\setminus \left(f_s^{-1}[V_s\cap\bigcup_{i<h^{-1}(s)}B_i]\right)$. Then, using Remark \ref{rem-zawieranie}, we get: 
\begin{align*}
f_s(x_s) & \in V_s\cap \left(E^A_{h^{-1}(s)}\setminus \bigcup_{i<h^{-1}(s)}B_i\right)
\subseteq V_s\cap\bigcup_{j\leq h^{-1}(s)}\left(\hat{E}^A_j\setminus \bigcup_{i<h^{-1}(s)}B_i\right)\\
& \subseteq V_s\cap\bigcup_{j\leq h^{-1}(s)}\left(\hat{E}^A_j\setminus \bigcup_{i<j}B_i\right)\subseteq V_s\cap B.
\end{align*}
This finishes the proof.
\end{proof}

Next result gives some bounds for three well-known ideals having topological representation: $\nwd=\J_\cM$, $\mz=\J_{\sigma\overline{\cN}}$ and $\ctbl=\J_{[2^\omega]^{\leq\omega}}$.

\begin{corollary}\
\label{cor-toprep}
\begin{itemize}
\item[(a)] $\bb(\nwd)=\add(\cM)$.
\item[(b)] $\pp\leq\bb(\mz)\leq\add(\cM)$.
\item[(c)] $\bb(\ctbl)=\aleph_1$.
\end{itemize}
\end{corollary}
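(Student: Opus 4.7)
The plan is to deduce all three statements directly from Theorem~\ref{thm-toprep} combined with Proposition~\ref{add'}, using the standard inequality $\add(\cM)\leq\bb$ from the Cichoń diagram (recalled in the Preliminaries). First I would verify the hypotheses of the ``moreover'' part of Theorem~\ref{thm-toprep} for each of the three $\sigma$-ideals $\cM$, $\sigma\overline{\cN}$ and $[2^\omega]^{\leq\omega}$: each is generated by compact sets (stated after the definition of ``generated by compact sets'' in Section~\ref{sec:preliminaries}) and each is invariant over basic sets by Proposition~\ref{invariantoverbasic}. I would also remark that the three ideals $\nwd$, $\mz$, $\ctbl$ are analytic (in fact Borel, and $\bf{\Pi^0_3}$), which is well known and, moreover, consistent with the general fact from \cite{KwelaSabok} cited earlier. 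With these hypotheses in place, Theorem~\ref{thm-toprep} gives
$$\bb(\nwd)=\min\{\add'(\cM),\bb\},\quad \bb(\mz)=\min\{\add'(\sigma\overline{\cN}),\bb\},\quad \bb(\ctbl)=\min\{\add'([2^\omega]^{\leq\omega}),\bb\}.$$

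Next I would dispose of each item by plugging in Proposition~\ref{add'}. For (a), Proposition~\ref{add'}(a) gives $\add'(\cM)=\add(\cM)$; since $\add(\cM)\leq\bb$, the minimum equals $\add(\cM)$, yielding $\bb(\nwd)=\add(\cM)$. For (b), Proposition~\ref{add'}(b) yields $\pp\leq\add'(\sigma\overline{\cN})\leq\add(\cM)\leq\bb$, so the minimum equals $\add'(\sigma\overline{\cN})$, and the displayed chain of inequalities gives $\pp\leq\bb(\mz)\leq\add(\cM)$. For (c), Proposition~\ref{add'}(c) gives $\add'([2^\omega]^{\leq\omega})=\aleph_1$, which is trivially below $\bb$, so $\bb(\ctbl)=\aleph_1$.

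There is essentially no obstacle here; the corollary is a packaging of the main theorem of the section with the computations of $\add'$. The only mildly technical point is to make sure that the three ideals do fall under the analytic case of Theorem~\ref{thm-toprep} (otherwise one would only get upper bounds), which is why I would explicitly note their Borel complexity before applying the theorem.
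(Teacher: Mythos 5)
Your overall route — Theorem~\ref{thm-toprep} combined with Proposition~\ref{add'} and $\add(\cM)\leq\bb$ — is exactly the paper's, and the conclusions you reach for (a), (b), (c) are all correct. There is, however, one factual slip and a related misreading of the theorem that are worth flagging.

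You assert that $\nwd$, $\mz$ and $\ctbl$ are all analytic (Borel, $\bf{\Pi^0_3}$) and that this matters because ``otherwise one would only get upper bounds.'' The claim is wrong for $\ctbl$: the ideal $\ctbl=\J_{[2^\omega]^{\leq\omega}}$ is $\bf{\Pi^1_1}$\nobreakdash-complete, not Borel, since the set of countable compacta in $K(2^\omega)$ is $\bf{\Pi^1_1}$-complete and the closure map $A\mapsto\cl_{2^\omega}(A)$ is Borel. So the ``moreover'' clause of Theorem~\ref{thm-toprep} is simply unavailable for (c). More importantly, you do not need it — and here is the misreading. The first, unconditional part of Theorem~\ref{thm-toprep} already gives a \emph{two-sided} estimate
$$\min\{\add'(I),\bb\}\leq\bb(\J_I)\leq\add'(I),$$
not merely an upper bound. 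Whenever $\add'(I)\leq\bb$ these two coincide and force $\bb(\J_I)=\add'(I)$. By Proposition~\ref{add'} together with $\add(\cM)\leq\bb$ this inequality $\add'(I)\leq\bb$ holds in all three cases ($\add'(\cM)=\add(\cM)\leq\bb$, $\add'(\sigma\overline{\cN})\leq\add(\cM)\leq\bb$, and $\add'([2^\omega]^{\leq\omega})=\aleph_1\leq\bb$). So the analyticity hypothesis is superfluous for the entire corollary, not just harmless for (c). Your final answers are right; you should just drop the ``Borel complexity'' remark and the appeal to the analytic case, and invoke only the general two-sided bound of Theorem~\ref{thm-toprep}.
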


\begin{proof}
Follows from Theorem \ref{thm-toprep}, Proposition \ref{add'} and the fact that $\aleph_1\leq\add(\cM)\leq\bb$ (by \cite[Theorem 3.11]{Bartoszynski}). 
\end{proof}

We end this Section with calculation of $\bb(\I)$ for one more well-known ideal defined on the rationals.

Recall that $\conv$ is the ideal on $\mathbb{Q}\cap[0,1]$ generated by sequences in $\mathbb{Q}\cap[0,1]$ that are convergent in $[0,1]$. This is a ${\bf{\Sigma^0_4}}$ ideal contained in $\nwd$, $\mz$ and $\ctbl$ (see \cite[Subsection 3.4]{MR2777744}).
 
\begin{proposition}
$\bb(\conv)=\aleph_1$.    
\end{proposition}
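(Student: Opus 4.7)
The lower bound $\bb(\conv)\geq\aleph_1$ is the general bound valid for any ideal. For the upper bound, the plan is to adapt the strategy of Theorem~\ref{thm-toprep} to $\conv$, using affine rescalings of $[0,1]$ in place of the basic-clopen shifts $f_s$. Note that $\conv$ does not fall under the topological-representation framework verbatim (the natural "ambient" would be the non-$\sigma$ ideal of finite subsets of $[0,1]$), but the same general argument can be made to work.

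Fix an enumeration $\{(a_m,b_m):m\in\omega\}$ of the open rational-endpoint subintervals of $[0,1]$ (a base for the topology), together with the affine homeomorphisms $f_m:[0,1]\to[a_m,b_m]$; each $f_m$ sends $\mathbb{Q}\cap[0,1]$ into $\mathbb{Q}\cap[a_m,b_m]$ and preserves the accumulation-point structure. For each $\alpha<\omega_1$ pick a distinct irrational $x_\alpha\in(0,1)$ and a sequence $C_\alpha\subseteq\mathbb{Q}\cap[0,1]$ of rationals converging to $x_\alpha$, and set $E_m^\alpha=f_m[C_\alpha]\setminus\bigcup_{i<m}f_i[C_\alpha]$. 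Then $E_m^\alpha$ is a subset of the convergent sequence $f_m[C_\alpha]\to f_m(x_\alpha)$, so $E_m^\alpha\in\conv$; and the sets $E_m^\alpha$ are pairwise disjoint by construction, so $\langle E_m^\alpha:m\in\omega\rangle\in\widehat{\cP}_\conv$.

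To verify that $\{\langle E_m^\alpha\rangle:\alpha<\omega_1\}$ witnesses $\bb(\conv)\leq\aleph_1$, fix a partition $\langle P_n\rangle\in\cP_\conv$ and set $B_m=f_m^{-1}\bigl[[a_m,b_m]\cap\bigcup_{i<m}\cl_{[0,1]}(P_i)\bigr]$. Since $P_i\in\conv$, the closure $\cl_{[0,1]}(P_i)$ has the same finite accumulation set as $P_i$; hence $B_m$ is a closed subset of $[0,1]$ with finite accumulation set $\mathrm{acc}(B_m)$. Because the $\omega_1$-many $x_\alpha$'s are distinct while each $\mathrm{acc}(B_m)$ is finite, only countably many $\alpha$ can have $x_\alpha\in\bigcup_m\mathrm{acc}(B_m)$; pick an $\alpha$ outside this countable set. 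Then $C_\alpha\not\subseteq B_m$ for every $m$: otherwise $x_\alpha=\lim C_\alpha$ would lie in the closed set $B_m$ and hence in $\mathrm{acc}(B_m)$, contradiction. Pick $y_m\in C_\alpha\setminus B_m$; then $f_m(y_m)\in f_m[C_\alpha]\setminus\bigcup_{i<m}\cl_{[0,1]}(P_i)\subseteq f_m[C_\alpha]\setminus\bigcup_{i<m}P_i$, and using $f_m[C_\alpha]\subseteq\bigcup_{j\leq m}E_j^\alpha$ together with Remark~\ref{rem-zawieranie}, $f_m(y_m)\in\bigcup_n(P_n\cap\bigcup_{i\leq n}E_i^\alpha)\cap[a_m,b_m]$. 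As $m$ ranges, this set meets every basic open interval, so it is dense in $[0,1]$, its accumulation set is all of $[0,1]$, and it is not in $\conv$.

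The main obstacle is the bookkeeping: confirming that each pullback $B_m$ is genuinely closed with finite accumulation set (the $\conv$-analogue of a "compact $I$-set" in Theorem~\ref{thm-toprep}), and arranging to extract a rational witness $y_m\in C_\alpha$ rather than settling for the irrational limit $x_\alpha$. The counting argument using distinctness of the $x_\alpha$'s plays the same role as the lower bound $\add'(I)\geq\aleph_1$ in Proposition~\ref{prop-toprep}; this is what keeps the cardinal as small as $\aleph_1$.
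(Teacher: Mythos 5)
Your proof is correct, but it follows a different route than the paper's. You adapt the machinery of Theorem~\ref{thm-toprep}: fix affine copies $f_m$ of $[0,1]$ onto all rational intervals, scatter one fixed convergent sequence $C_\alpha$ (with irrational limit $x_\alpha$) across all of them via $E^\alpha_m=f_m[C_\alpha]\setminus\bigcup_{i<m}f_i[C_\alpha]$, and then, after choosing $\alpha$ so that $x_\alpha$ dodges the (countably many) accumulation points of the pullbacks $B_m$, show that $\bigcup_n\bigl(P_n\cap\bigcup_{i\leq n}E^\alpha_i\bigr)$ hits every rational interval and is therefore dense, hence outside $\conv$. The paper instead fixes once and for all a family $\{A^\alpha_n:(\alpha,n)\in\omega_1\times\omega\}$ of pairwise-disjoint convergent sequences with \emph{all limits distinct}, uses countability of $\bigcup_n\cl_{[0,1]}(B_n)$ to find $\alpha$ with every $\lim A^\alpha_m$ outside that closure (so $A^\alpha_m\cap B_n$ is finite for all $m,n$), and then derives a contradiction directly: if the union $B$ were in $\conv$ and covered mod finite by convergent sequences $D_0,\dots,D_m$, one picks $j$ with $\lim A^\alpha_j$ different from every $\lim D_i$, checks $A^\alpha_j\subseteq^\star B\subseteq^\star\bigcup_i D_i$, and gets a contradiction since no $D_i$ can contain an infinite piece of $A^\alpha_j$. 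The paper's argument is shorter and never leaves the combinatorics of convergent sequences; it does not need density of the resulting set, only that one witness sequence lands almost entirely inside $B$. Your argument makes the structural analogy with the topological-representation case explicit (the role of $\add'$ and the $f_s$ shifts), at the cost of the rescaling bookkeeping; it buys a conceptual link to Section~\ref{sec:toprep} but is not shorter. Both are valid.

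A couple of minor points worth tightening in your write-up: to conclude density from the points $f_m(y_m)\in[a_m,b_m]$, observe that any nonempty open $U$ contains some \emph{closed} rational interval $[a_m,b_m]$, so hitting each $[a_m,b_m]$ suffices; and while $C_\alpha\not\subseteq B_m$ already gives a witness $y_m$, you can in fact say more: since $x_\alpha$ is irrational and hence not in $C_\alpha$, $x_\alpha\notin\mathrm{acc}(B_m)$ forces $C_\alpha\cap B_m$ to be finite, so there are infinitely many choices of $y_m$.
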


\begin{proof}
By \cite[Proposition 4.1]{MR3269495}, $\bb(\I)\geq\aleph_1$ for every ideal $\I$, so we only need to show that $\bb(\I)\leq\aleph_1$.

Fix any family $\{A^\alpha_n: n\in\omega,\alpha<\aleph_1\}\subseteq\conv$ such that:
\begin{itemize}
    \item each $A^\alpha_n$ is a sequence in $\mathbb{Q}\cap[0,1]$ convergent in $[0,1]$;
    \item $\lim A^\alpha_n\neq \lim A^\beta_m$, for all $(\alpha,n),(\beta,m)\in\aleph_1\times\omega$ such that $(\alpha,n)\neq(\beta,m)$;
    \item $A^\alpha_n\cap A^\alpha_m=\emptyset$ for all $\alpha<\aleph_1$ and $n,m\in\omega$ such that $n\neq m$.
\end{itemize}
We will show that this family witnesses $\bb(\conv)\leq\aleph_1$.

Let $\langle B_n:n\in\omega\rangle\in\cP_\conv$. Since $\bigcup_{n\in\omega}\cl_{[0,1]}(B_n)$ is countable, there is $\alpha<\aleph_1$ such that $\lim A^\alpha_m\notin\bigcup_{n\in\omega}\cl_{[0,1]}(B_n)$, for all $m\in\omega$. In particular, $A^\alpha_m\cap B_n$ is finite for all $n,m\in\omega$.

Suppose to the contrary that $B=\bigcup_{n\in\omega}\left(B_n\cap \bigcup_{i\leq n}A^\alpha_i\right)\in\I$, i.e., there are sequences $D_0,\ldots,D_m\subseteq\mathbb{Q}\cap[0,1]$ convergent in $[0,1]$ and such that $B\subseteq^\star \bigcup_{i\leq m}D_i$. Find $j\in\omega$ such that $\lim A^\alpha_j\neq\lim D_i$ for all $i\leq m$. Observe that:
$$A^\alpha_j\setminus\bigcup_{n<j}B_n=A^\alpha_j\cap\bigcup_{n\geq j}B_n\subseteq \bigcup_{n\geq j}\left(B_n\cap\bigcup_{i\leq n}A^\alpha_i\right)\subseteq B.$$
Now, since $A^\alpha_j\cap \bigcup_{n<j}B_n$ is finite, we get $A^\alpha_j\subseteq^\star B\subseteq^\star \bigcup_{i\leq m}D_i$. Thus, there is $i\leq m$ with $A^\alpha_j\cap D_i$ infinite. This contradicts $\lim A^\alpha_j\neq\lim D_i$ and finishes the proof.
\end{proof}


\section{An ideal with \texorpdfstring{$\bb(\I)>\dd$}{}}
\label{sec:>d}

In this Section we will show that, consistently, there is an ideal $\I$ with $\bb(\I)>\dd$. It will follow from the following more general result.

\begin{lemma}
\label{kappa}
If $\kappa<\cf(\cc)\leq\uu=\cc = \cc^\kappa$, then there exists an ideal $\I$ such that $\bb(\I)>\kappa$.
\end{lemma}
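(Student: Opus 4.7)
The plan is to construct $\I$ by transfinite recursion of length $\cc$, maintaining at every stage $\xi<\cc$ a proper filter $\cF_\xi$ on $\omega$ generated by fewer than $\cc$ sets, with dual ideal $\I_\xi$; the final ideal $\I=\bigcup_{\xi<\cc}\I_\xi$ will be the witness to $\bb(\I)>\kappa$. The three hypotheses play complementary roles. Using $\cc^\kappa=\cc$, we fix an enumeration $\langle\cE_\xi:\xi<\cc\rangle$ of every family $\cE=\{\langle E^\alpha_n:n\in\omega\rangle:\alpha<\kappa\}$ of $\kappa$ many sequences of pairwise disjoint subsets of $\omega$; there are $\cc^{\kappa\cdot\omega}=\cc$ such families. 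The hypothesis $\kappa<\cf(\cc)$ ensures that $|\cF_\xi|<\cc$ is preserved as we adjoin $\leq\kappa$ new generators at each step, and also through limit stages. Finally, $\uu=\cc$ will be used to guarantee that the filter extensions performed in Case~B below do not collapse the filter to an improper one.

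At stage $\xi$ we process $\cE_\xi=\{\langle E^{\xi,\alpha}_n\rangle:\alpha<\kappa\}$ via the following dichotomy. \emph{Case A:} some $E^{\xi,\alpha_0}_{n_0}$ is $\cF_\xi$-positive, equivalently $E^{\xi,\alpha_0}_{n_0}\notin\I_\xi$. Then we adjoin $E^{\xi,\alpha_0}_{n_0}$ to $\cF_{\xi+1}$; this guarantees $E^{\xi,\alpha_0}_{n_0}\notin\I$, and so $\cE_\xi\not\subseteq\widehat{\cP}_\I$ is excluded from being a candidate witness to $\bb(\I)\leq\kappa$. \emph{Case B:} every $E^{\xi,\alpha}_n$ lies in $\I_\xi$. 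We must exhibit a partition $\langle A^\xi_m\rangle\in\cP_\I$ such that $X^{\xi,\alpha}:=\bigcup_m\bigl(A^\xi_m\cap\bigcup_{i\leq m}E^{\xi,\alpha}_i\bigr)\in\I$ for every $\alpha<\kappa$. By Remark~\ref{rem-zawieranie}, this reduces to producing an increasing chain $\emptyset=Y^\xi_0\subseteq Y^\xi_1\subseteq\cdots$ of $\I_\xi$-sets with $\bigcup_i Y^\xi_i=\omega$ such that, for each $\alpha$, there is some $i_0(\alpha)<\omega$ with $E^{\xi,\alpha}_i\subseteq Y^\xi_i$ for all $i\geq i_0(\alpha)$. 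Taking $A^\xi_m:=Y^\xi_{m+1}\setminus Y^\xi_m$ then yields the required partition, and each $X^{\xi,\alpha}$ becomes automatically a finite union of $\I_\xi$-sets.

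The central difficulty is the Case~B construction: a single $\omega$-chain $\langle Y^\xi_i\rangle$ must diagonalize against $\kappa$ many sequences while every $Y^\xi_i$ stays in $\I_\xi$. The plan is to assign to each $\alpha<\kappa$ a commitment stage $m(\alpha)<\omega$ and to enlarge the filter at step $\xi+1$ by adjoining $\omega\setminus\bigcup_{i\geq m(\alpha)}E^{\xi,\alpha}_i$ for every $\alpha<\kappa$; this places each $\bigcup_{i\geq m(\alpha)}E^{\xi,\alpha}_i$ into $\I_{\xi+1}$, so one may define $Y^\xi_{m+1}$ at stage $m$ to absorb the tails $\bigcup_{i\geq m}E^{\xi,\alpha}_i$ for every $\alpha$ with $m(\alpha)=m$, together with a single new point of $\omega$ to ensure $\bigcup_m Y^\xi_m=\omega$. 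The combinatorial heart is the verification that the enlarged filter is still proper: for every finite intersection $F$ of $\cF_\xi$-elements and every finite $\{\alpha_1,\ldots,\alpha_l\}\subseteq\kappa$, the set $F\setminus\bigcup_{j\leq l}\bigcup_{i\geq m(\alpha_j)}E^{\xi,\alpha_j}_i$ must be infinite. A suitable choice of the commitment stages $m(\alpha)$, exploiting $\uu=\cc$ to anticipate the structure of the filter at later stages and $\kappa<\cf(\cc)$ to keep the total number of accumulated constraints below $\cc$, is the crux of the argument.

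After the recursion, any $\cE\subseteq\widehat{\cP}_\I$ with $|\cE|\leq\kappa$ equals some $\cE_\xi$; since $\cE\subseteq\widehat{\cP}_\I$ excludes Case~A at stage $\xi$, Case~B applied there and produced a partition $\langle A^\xi_m\rangle\in\cP_\I$ with $\bigcup_m\bigl(A^\xi_m\cap\bigcup_{i\leq m}E^{\xi,\alpha}_i\bigr)\in\I$ for every $\alpha<\kappa$. This witnesses that $\cE$ does not realize $\bb(\I)\leq\kappa$, so $\bb(\I)>\kappa$ as required.
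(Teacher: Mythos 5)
Your Case~B has a gap that no choice of commitment stages can repair, and which the paper avoids by a structurally different construction. You need every partition cell $A^\xi_m=Y^\xi_{m+1}\setminus Y^\xi_m$ to lie in the ideal, which (given how you build the chain) forces $Y^\xi_{m+1}$ to be an ideal set containing the tail $\bigcup_{i\geq m}E^{\xi,\alpha}_i$ for \emph{every} $\alpha$ with $m(\alpha)=m$. But $m$ is a map from $\kappa$ into $\omega$, and in the intended application $\kappa=\dd\geq\aleph_1$; by pigeonhole some fibre $m^{-1}(n)$ is uncountable, so $Y^\xi_{n+1}$ is an uncountable union of ideal sets. Adjoining the complements $\omega\setminus\bigcup_{i\geq m(\alpha)}E^{\xi,\alpha}_i$ to the filter places each \emph{individual} tail into the new ideal, but ideals are closed only under finite unions; the uncountable union $Y^\xi_{n+1}$ is not thereby an ideal set, and generically it will not be. The hypotheses $\uu=\cc$ and $\kappa<\cf(\cc)$ control the size of the generating family and keep the filter non-maximal, but they do not make the ideal $\kappa$-complete, which is what your scheme implicitly requires.

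The paper never asks a single cell to absorb all the tails. At a stage where all the $h_\alpha(\gamma,n)$ already lie in $\hat{\I}_\alpha$, it uses $\uu=\cc$ to recursively pick sets $C_n\notin\I(\hat{\I}_\alpha\cup\{C_j:j<n\})$ (possible because fewer than $\uu$ generators cannot yield a maximal ideal), sets $B_n=(\{n\}\cup C_n)\setminus\bigcup_{j<n}B_j$ to get a partition whose cells stay positive over the previous generators, and then adjoins to the ideal \emph{both} the sets $B_n$ \emph{and}, for each $\gamma<\kappa$, the single set $\bigcup_{n}\bigl(B_n\cap\bigcup_{i\leq n}h_\alpha(\gamma,i)\bigr)$ as a new generator. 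That adds $\kappa$ generators but each is an individual set added by fiat, never an uncountable union. Properness is checked by a finite-union argument: a putative cover of $\omega$ by finitely many generators, intersected with the disjoint cell $B_k$ for the largest index $k$ involved, collapses each bad generator to $\bigcup_{i\leq k}h_\alpha(\gamma_j,i)\in\hat{\I}_\alpha$ and so forces $B_k\in\hat{\I}_\alpha$, a contradiction. You would need to replace your chain-of-$Y$'s scheme by this generator-by-generator scheme (or something equivalent) for the argument to close.
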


\begin{proof}
Without loss of generality we may assume that $\kappa$ is infinite (as for finite $\kappa$ this is trivial). Recall that for $\cA\subseteq\cP(\omega)$ we write:
$$\I(\cA)=\left\{B\subseteq\omega: B\subseteq^\star\bigcup\cA'\text{ for some }\cA'\in[\cA]^{<\omega}\right\}.$$
In particular, $\I(\cA)$ is an ideal if and only if $\omega\notin\I(\cA)$.

Since $|\cP(\omega)^{\kappa\times\omega}|=\cc^\kappa=\cc$, we can fix an enumeration $\{h_\alpha: \alpha<\cc\}$ of $\cP(\omega)^{\kappa\times\omega}$ such that for each $h\in\cP(\omega)^{\kappa\times\omega}$ and $\alpha<\cc$ there is $\alpha<\beta<\cc$ with $h_\beta=h$.

We will recursively define a sequence $\{\I_\alpha: \alpha<\cc\}$ of subsets of $\cP(\omega)$ such that for each $\alpha<\cc$:
\begin{itemize}
\item[(i)] $\I_\alpha$ is an ideal;
\item[(ii)] $\I_\alpha$ is generated by at most $|\alpha\cdot\kappa|$ sets;
\item[(iii)] if $\beta<\alpha$, then $\I_\beta\subseteq\I_\alpha$;
\item[(iv)] if $h_\alpha[\kappa\times\omega]=\{h_\alpha(\gamma,n): \gamma<\kappa,n\in\omega\}\subseteq\bigcup_{\delta<\alpha}\I_\delta$, then there is $\langle B_n:n\in\omega\rangle\in\cP_{\I_{\alpha}}$ such that $\bigcup_{n\in\omega}\left(B_n\cap\bigcup_{i\leq n}h_\alpha(\gamma,i)\right)\in\I_\alpha$, for all $\gamma<\kappa$.
\end{itemize}

Assume that $\alpha<\cc$ and $\I_\beta$, for all $\beta<\alpha$, are already defined. Observe that $\hat{\I}_\alpha=\bigcup_{\delta<\alpha}\I_\delta$ is an ideal (as a union of an increasing sequence of ideals) generated by at most $|\alpha\cdot(\alpha\cdot\kappa)|=|\alpha\cdot\kappa|$ sets (since $\kappa$ is infinite). Hence, if $h_\alpha(\gamma,n)\notin\hat{\I}_\alpha$ for some $(\gamma,n)\in\kappa\times\omega$, then just put $\I_\alpha=\hat{\I}_\alpha$ and observe that $\I_\alpha$ is as needed. 

Assume now that $h_\alpha[\kappa\times\omega]\subseteq\hat{\I}_\alpha$. Recursively define a sequence $\langle C_n:n\in\omega\rangle$ of subsets of $\omega$ such that $C_n\notin\I(\hat{\I}_\alpha\cup\{C_j: j<n\})$ and $\I(\hat{\I}_\alpha\cup\{C_j: j\leq n\})$ is an ideal. This is possible as $|\alpha\cdot(\alpha\cdot\kappa)+n|<\cc=\uu$ guarantees that $\I(\hat{\I}_\alpha\cup\{C_j: j<n\})$ is not maximal. Next, for each $n\in\omega$ put $B_n=(\{n\}\cup C_n)\setminus\bigcup_{j<n}B_j$ and note that $\langle B_n:n\in\omega\rangle$ is a partition of $\omega$ and $B_n\notin\I(\hat{\I}_\alpha\cup\{B_j: j<n\})=\I(\hat{\I}_\alpha\cup\{C_j: j<n\})$, for each $n$. Moreover, $\I(\hat{\I}_\alpha\cup\{B_k:\ k\in\omega\})$ is an ideal, as otherwise we would have: 
$$\omega\in\I(\hat{\I}_\alpha\cup\{B_k: k\in\omega\})=\bigcup_{n\in\omega}\I(\hat{\I}_\alpha\cup\{B_k: k<n\})=\bigcup_{n\in\omega}\I(\hat{\I}_\alpha\cup\{C_k: k<n\}),$$
which contradicts the fact that $\omega\notin\I(\hat{\I}_\alpha\cup\{C_k: k\leq n\})$ for each $n$.

Define: 
$$\I_\alpha=\I\left(\hat{\I}_\alpha\cup\{B_n: n\in\omega\}\cup\left\{\bigcup_{n\in\omega}\left(B_n\cap\bigcup_{i\leq n}h_\alpha(\gamma,i)\right):\gamma<\kappa\right\}\right).$$
Obviously, items (iii) and (iv) are satisfied. What is more, item (ii) is also satisfied as $|\alpha\cdot\kappa+\omega+\kappa|=|\alpha\cdot\kappa|$. Thus, we only need to check that $\omega\notin\I_\alpha$. 

Assume to the contrary that $\omega\in\I_\alpha$. Then there are $k,m\in\omega$, $\gamma_0,\ldots,\gamma_m<\kappa$ and $A\in\hat{\I}_\alpha$ such that: 
$$\omega=A\cup\bigcup_{j<k}B_j\cup\bigcup_{j\leq m}\left(\bigcup_{n\in\omega}\left(B_n\cap\bigcup_{i\leq n}h_\alpha(\gamma_j,i)\right)\right).$$ 
However, since $\langle B_n:n\in\omega\rangle$ is a partition of $\omega$, we have: 
\begin{align*}
    B_k & =B_k\cap\omega=B_k\cap\left(A\cup\bigcup_{j<k}B_j\cup\bigcup_{j\leq m}\left(\bigcup_{n\in\omega}\left(B_n\cap\bigcup_{i\leq n}h_\alpha(\gamma_j,i)\right)\right)\right) \\
    & \subseteq A\cup\bigcup_{j\leq m}\bigcup_{i\leq k}h_\alpha(\gamma_j,i)\in\hat{\I}_\alpha,
\end{align*}
which contradicts $B_k\notin\I(\hat{\I}_\alpha\cup\{B_j:\ j<k\})$. Thus, $\omega\notin\I_\alpha$.

Once the recursive construction of $\{\I_\alpha: \alpha<\cc\}$ is completed, define $\I=\bigcup_{\alpha<\cc}\I_\alpha$. Clearly, $\I$ is an ideal (by items (i) and (iii)). To finish the proof we need to show that $\bb(\I)>\kappa$. Fix any $\{\langle E^\gamma_n:n\in\omega\rangle: \gamma<\kappa\}\subseteq\widehat{\cP}_\I$. Then for each $(\gamma,n)\in\kappa\times\omega$ there is $\beta(\gamma,n)<\cc$ such that $E^\gamma_n\in\I_{\beta(\gamma,n)}$. Denote $\beta=\sup\{\beta(\gamma,n): (\gamma,n)\in\kappa\times\omega\}$. Since $|\kappa\times\omega|=\kappa<\cf(\cc)$, we get that $\beta<\cc$. Thus, we can find $\beta<\alpha<\cc$ with $h_\alpha(\gamma,n)=E^\gamma_n$, for all $(\gamma,n)\in\kappa\times\omega$. By item (iii), $h_\alpha[\kappa\times\omega]\subseteq\bigcup_{\delta<\alpha}\I_\delta$. Using item (iv) for $\alpha$ we get $\langle B_n:n\in\omega\rangle\in\cP_{\I_{\alpha}}\subseteq\cP_\I$ such that $\bigcup_{n\in\omega}\left(B_n\cap\bigcup_{i\leq n}E^\gamma_i\right)\in\I_\alpha\subseteq\I$, for all $\gamma<\kappa$. Therefore, $\bb(\I)>\kappa$ and the proof is completed.
\end{proof}

\begin{theorem}
If $\dd<\cf(\cc)\leq\uu=\cc = \cc^\dd$, then there exists an ideal $\I$ with $\bb(\I)>\dd$. In particular, it is consistent that there exists an ideal $\I$ such that $\bb(\I)>\dd$.
\end{theorem}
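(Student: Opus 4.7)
The proof of the main implication is essentially immediate: instantiate Lemma \ref{kappa} at $\kappa = \dd$. The cardinal arithmetic hypothesis $\dd<\cf(\cc)\leq\uu=\cc = \cc^\dd$ of the theorem is literally the hypothesis of Lemma \ref{kappa} with $\kappa := \dd$ (using that $\dd \geq \aleph_1$ is infinite), so the lemma produces an ideal $\I$ with $\bb(\I) > \kappa = \dd$, which is exactly what is claimed. No further combinatorial argument is needed here, since all the work has already been done in the construction of the chain $\{\I_\alpha:\alpha<\cc\}$ in Lemma \ref{kappa}.

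For the second (``In particular'') sentence, one must exhibit a model of ZFC in which $\dd<\cf(\cc)\leq\uu=\cc = \cc^\dd$ holds. My plan is to point to the simplest possible configuration realizing these constraints, namely $\dd = \aleph_1$, $\cc = \uu = \aleph_2$, together with the ground-model cardinal arithmetic $2^{\aleph_1} = \aleph_2$. Under this configuration one indeed has $\cf(\cc) = \aleph_2 > \aleph_1 = \dd$, $\uu = \cc$, and $\cc^\dd = \aleph_2^{\aleph_1} = 2^{\aleph_1} = \aleph_2 = \cc$. A concrete way to arrange this is to start from a model of GCH and perform a forcing iteration of length $\aleph_2$ that adds $\aleph_2$ new reals without adding dominating reals (so $\dd$ remains $\aleph_1$) while simultaneously producing an ultrafilter base of cardinality $\aleph_2$ (so $\uu = \cc$); such constructions are standard in the literature on cardinal invariants of the continuum.

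The only potential obstacle is thus the existence of a witnessing model, but this is purely a matter of citing standard forcing technology and is disjoint from the combinatorial core of the paper, which resides in Lemma \ref{kappa}. Consequently, the total structure of the proof is two lines: apply Lemma \ref{kappa} with $\kappa = \dd$, and then invoke the consistency of the required cardinal arithmetic to conclude.
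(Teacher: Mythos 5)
Your first paragraph is exactly the paper's argument: apply Lemma~\ref{kappa} with $\kappa=\dd$, observing that $\dd\geq\aleph_1$ is infinite. Nothing to add there.

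The ``In particular'' part has two genuine gaps. First, your explanation of how to arrange $\uu=\cc$ runs in the wrong direction: exhibiting \emph{an} ultrafilter with a base of cardinality $\aleph_2$ only shows $\uu\leq\aleph_2$, which is vacuous since $\uu\leq\cc=\aleph_2$ always holds. What $\uu=\cc=\aleph_2$ actually requires is the negative statement that \emph{no} ultrafilter admits a base of size $\aleph_1$, and this has to be established for the particular forcing you choose. The paper avoids any such argument by just taking the model obtained by adding $\aleph_2$ random reals over a GCH ground model and citing the standard fact (cf.\ \cite[p.~474]{MR2768685}) that $\dd=\aleph_1$ and $\uu=\cc=\aleph_2$ hold there. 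Second, you quietly use that $2^{\aleph_1}=\aleph_2$ holds \emph{in the extension} in the computation $\cc^\dd=\aleph_2^{\aleph_1}=2^{\aleph_1}=\aleph_2$, but you only invoke the ground-model equality $2^{\aleph_1}=\aleph_2$. This preservation is not automatic for an arbitrary iteration of length $\aleph_2$ adding $\aleph_2$ reals; the paper handles it via the ccc nice-name bound (Kunen, Lemma~IV.3.11), computing $|M(\aleph_2)|^{\aleph_1}=\aleph_2$ in the ground model and concluding $2^{\aleph_1}\leq\aleph_2$ in $V[G]$. If you keep your more abstract ``any suitable iteration'' framing, you need to either impose ccc (or some chain condition) explicitly so that this nice-name bound applies, or restrict to a concrete forcing where both $\uu=\cc$ and the cardinal arithmetic are already known.
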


\begin{proof}
The first part follows directly from Lemma \ref{kappa}.

We will show that the assumptions $\dd<\cf(\cc)\leq\uu=\cc = \cc^\dd$ are consistent (specifically, that they hold in the generic extension obtained by adding $\aleph_2$ random reals to the model of GCH).

Suppose that GCH is satisfied in the ground model $V$.
Let 
$$M(\aleph_2) = \{C\subseteq 2^{\aleph_2}:\text{$C$ is a closed set of positive measure}\}$$
(ordered by the inclusion modulo null sets) be a forcing which adds $\aleph_2$ random reals (here we consider the product measure on $2^{\aleph_2}$).
Let $G$ denote a $M(\aleph_2)$-generic filter.

It is known that $\dnumber=\aleph_1$ and $\unumber=\continuum=\aleph_2$ in $V[G]$ (see e.g.~\cite[p.~474]{MR2768685}), so $\dd<\cf(\cc)=\uu=\cc$. It remains to show that  $\cc = \cc^\dd$ in $V[G]$.

Denote $\kappa = |M(\aleph_2)|$, $\lambda=\aleph_1$ and $\delta = \kappa^{\lambda}$. Since GCH holds in $V$, 
we have 
$\aleph_2 = 2^{\aleph_1}$,
and consequently
$$\kappa = |M(\aleph_2)| = |[\aleph_2]^{<\aleph_0}|^{\aleph_0}  =\aleph_2^{\aleph_0} = (2^{\aleph_1})^{\aleph_0} = 2^{\aleph_1}=\aleph_2$$
and
$$\delta = \aleph_2^{\aleph_1} = (2^{\aleph_1})^{\aleph_1} = \aleph_2.$$

Hence, $\kappa, \lambda,\delta$ are infinite cardinals. As $M(\aleph_2)$ is ccc, using \cite[Lemma~IV.3.11 at p.~267]{MR2905394} we obtain in $V[G]$ the inequality $2^\lambda\leq \delta$ which yields:
$$\continuum^\dnumber = (2^{\aleph_0})^{\dnumber}  = 2^\dnumber = 2^{\aleph_1} = 2^\lambda\leq \delta = \aleph_2 = \continuum.$$
Since $\continuum\leq \continuum^\dnumber$, we obtain $\continuum=\continuum^\dnumber$, and the proof is finished. 
\end{proof}


\section{Dominating number and analytic ideals}
\label{sec:analytic}

In this Section we show that $\bb(\I)\leq\dd$ for every analytic ideal $\I$. We will apply ideas developed by Kat\v{e}tov in~\cite{Katetov,Kat1,Kat2} and by Debs and Saint Raymond in~\cite{Debs}. We start with recalling several technical notions. 

If $\I$ and $\J$ are ideals on $X$ and $Y$, respectively, then we say that \emph{$\I$ is below $\J$ in the Kat\v{e}tov order} (in short: $\I\leq_{K}\J$), if there is a function $f:Y\to X$ such that $f^{-1}[A]\in\J$ for every $A\in\I$. If we can find a bijection with the above property, then we say that \emph{$\J$ contains an isomorphic copy of $\I$} and write $\I\sqsubseteq\J$.

If $\{X_t: t\in T\}$ is a family of sets, then $\sum_{t\in T}X_t=\{(t,x): t\in T,x\in X_t\}$ is its disjoint sum. The vertical section of a set $A\subseteq \sum_{t\in T}X_t$ at a point $t\in T$ is defined 
by $A_{(t)} = \{x\in X_t: (t,x)\in A\}$. 

Let $\I$ be an ideal on a countable set $T$ and $\{\I_t: t\in T\}$ be a family of ideals. Following \cite{Katetov}, we define the ideal:
$$\I-\sum_{t\in T}\I_t=\{A\subseteq \sum_{t\in T}\left(\bigcup\I_t\right):\{t\in T: A_{(t)}\notin\I_t\}\in\I\}.$$
In particular, if $\J$ is some ideal on $S$ and $\I_t=\J$ for all $t\in T$, then we denote: 
$$\I\otimes\J=\I-\sum_{t\in T}\I_t=\{A\subseteq T\times S: \{t\in T: A_{(t)}\notin\J\}\in\I\}.$$
Following \cite{Kat2,Kat1}, let $\{\fin^\alpha:1\leq\alpha<\omega_1\}$ be the family of ideals given by:
\[
\fin^\alpha=\begin{cases}
    \fin, & \text{if }\alpha=1,\\
    \fin\otimes\fin^\beta, & \text{if }\alpha=\beta+1,\\
    \I_\alpha-\sum_{\beta<\alpha}\fin^\beta, & \text{if }\alpha\text{ is a limit ordinal},
\end{cases}
\]
where $\I_\alpha$ is the ideal on the ordinal $\alpha$ generated by the family $\{\beta:\beta<\alpha\}$ (in particular, $\I_\omega=\fin$).

Finally, for ideals $\I$, $\J$ and $\K$ we define:
	\begin{equation*}
			\bb(\I,\J,\K) = 
			\min \left\{|\cE|:\cE\subseteq\widehat{\cP}_\K \land \forall_{\langle A_n\rangle\in\cP_\J}\, \exists_{\langle E_n\rangle\in\cE} \, \bigcup_{n\in\omega}\left(A_n\cap \bigcup_{i\leq n}E_i\right)\notin\I\right\}
	\end{equation*}
(the above is an equivalent form of the coefficient defined in \cite{MR3624786} -- the equivalence is shown in \cite[Proposition 3.9]{nasza}). Note that $\bb(\I,\I,\I)=\bb(\I)$. Moreover, it is easy to see that if $\cK\subseteq\cK'$, then $\bb(\I,\J,\K)\geq\bb(\I,\J,\K')$ (see \cite[Proposition 3.8]{nasza}).

We are ready to prove some lemmas, which will imply that $\bb(\I)\leq\dd$ for every analytic ideal $\I$.

\begin{lemma}
\label{successor}
If $1\leq\alpha<\omega_1$ is an ordinal, $\fin^{\alpha+1}\not\sqsubseteq\I$, but $\fin^{\alpha}\sqsubseteq\I$, then $\bb(\I)\leq\dd$.
\end{lemma}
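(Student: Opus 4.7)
The plan is to invoke the combinatorial characterization of $\bb(\I)$ recalled at the end of Section~\ref{sec:preliminaries} and to build, explicitly from a dominating family of size $\dd$, a family $\cE\subseteq\widehat{\cP}_\I$ of cardinality $\dd$ such that for every $\langle C_n\rangle\in\cP_\I$ one can find $\langle E_n\rangle\in\cE$ with $\bigcup_n\bigl(C_n\cap\bigcup_{i\le n}E_i\bigr)\notin\I$. The assumption $\fin^\alpha\sqsubseteq\I$ supplies a bijection $\phi\colon T_\alpha\to X$ (where $T_\alpha$ denotes the base set of $\fin^\alpha$ and $X$ that of $\I$) with $\phi[A]\in\I$ for every $A\in\fin^\alpha$, which will serve as the vehicle for transferring combinatorial structure from $\fin^\alpha$ into $\I$.

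Fix a dominating family $\{f_\xi:\xi<\dd\}\subseteq\omega^\omega$, without loss of generality strictly increasing. For each $f\in\omega^\omega$, using the inductive description of $\fin^\alpha$ (Fubini product $\fin\otimes\fin^\beta$ in the successor case $\alpha=\beta+1$, and an $\I_\alpha$-sum of lower-rank $\fin^\beta$'s in the limit case), produce a partition $\langle F^f_n:n\in\omega\rangle\in\cP_{\fin^\alpha}$ whose $n$-th piece has ``$\alpha$-depth'' controlled by $f(n)$. Set $E^\xi_n=\phi[F^{f_\xi}_n]$; then $\langle E^\xi_n\rangle\in\cP_\I\subseteq\widehat{\cP}_\I$, and we take $\cE=\{\langle E^\xi_n\rangle:\xi<\dd\}$.

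Verification proceeds by contradiction. Suppose $\cE$ does not witness $\bb(\I)\le\dd$; fix a partition $\langle C_n\rangle\in\cP_\I$ such that for every $\xi<\dd$ one has $\bigcup_n\bigl(C_n\cap\bigcup_{i\le n}E^\xi_i\bigr)\in\I$. The plan is to use $\langle C_n\rangle$ together with $\phi$ and this compatibility across the \emph{entire} dominating family to assemble a new bijection $\psi\colon\omega\times T_\alpha\to X$ witnessing $\fin^{\alpha+1}\sqsubseteq\I$: the $n$-th column $\psi[\{n\}\times T_\alpha]$ is carried inside $C_n$ with its $\fin^\alpha$-structure inherited from $\phi$ after a change of coordinates, and the dominating property of $\{f_\xi\}$ is precisely what forces every set in $\fin^{\alpha+1}=\fin\otimes\fin^\alpha$ to pull back through $\psi$ into $\I$. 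This contradicts the hypothesis $\fin^{\alpha+1}\not\sqsubseteq\I$ and completes the proof.

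The main obstacle lies in this last step: turning ``failure for every member of a dominating family'' into a bona fide isomorphic embedding of $\fin^{\alpha+1}$. This requires a K\"onig-type diagonalization that simultaneously respects the column decomposition provided by $\langle C_n\rangle$ and the recursive tree structure defining $\fin^\alpha$, and it uses crucially that $\{f_\xi\}$ is \emph{dominating} rather than merely unbounded, so that no potential $\fin^{\alpha+1}$-small set can escape detection by all the $f_\xi$'s.
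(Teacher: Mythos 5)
Your overall template---transport a $\dd$-sized family through the embedding $\phi$ of $\fin^\alpha$ and argue by contradiction against the $\fin^{\alpha+1}$-hypothesis---is the right genre of argument, but two essential ingredients of the paper's proof are missing, and without them the contradiction step cannot be completed.

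First, the natural object you can extract from a ``bad'' partition $\langle C_n\rangle$ is a selector-type map $h\colon X\to\omega\times T_\alpha$ sending each point to the index of the column it lies in, together with its $\fin^\alpha$-coordinates. This $h$ is an injection but emphatically \emph{not} a bijection, and what it witnesses, if everything pulls back into $\I$, is only $\fin^{\alpha+1}\leq_K\I$, not $\fin^{\alpha+1}\sqsubseteq\I$. Your plan ``assemble a new bijection $\psi\colon\omega\times T_\alpha\to X$'' skips exactly this difficulty; you would have to \emph{prove} that a Kat\v{e}tov reduction of $\fin^{\alpha+1}$ can always be upgraded to an isomorphic embedding, and that is a genuine theorem. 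The paper handles it by invoking an external result (the one it cites as \cite[Example 4.1]{Kat}) stating precisely that $\fin^{\alpha+1}\leq_K\I$ implies $\fin^{\alpha+1}\sqsubseteq\I$, and then argues in the \emph{contrapositive}: since $\fin^{\alpha+1}\not\sqsubseteq\I$, we get $\fin^{\alpha+1}\not\leq_K\I$, so some generator of $\fin^{\alpha+1}$ fails to pull back. Calling the missing step a ``K\"onig-type diagonalization'' names a technique without establishing that it applies; as written this is not a proof.

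Second, even granting the $\leq_K$-to-$\sqsubseteq$ upgrade, a case split is required which your argument omits. In the paper the ideal $\fin\otimes\J\cong\fin^{\alpha+1}$ has three kinds of generators: whole columns, staircase sets $\bigcup_n\{n\}\times\gamma_{g(n)}\times\omega$, and sets with all sections $\fin^{\beta_\delta}$-small. The domination hypothesis can only control staircase generators. Whole-column generators pull back into $\I$ automatically (they land in some $B_n$), but the sectionwise-small generators pull back into $\I$ only under the additional assumption that every $X\subseteq\gamma\times\omega$ with all traces $(X\cap B_n)_{(\delta)}$ small actually lies in $\I$; this is the content of the paper's ``second case.'' In the complementary ``first case,'' where such an $X\notin\I$ exists, the lemma must be proved by a direct computation using a single member of the witnessing family, with no Kat\v{e}tov argument at all. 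Without this dichotomy you cannot rule out that the obstruction lies in a third-type generator, where domination gives you nothing. A smaller point: the paper's witnesses $A^\alpha_n=(\gamma_{f_\alpha(n+1)}\setminus\gamma_{f_\alpha(n)})\times\omega$ use only the outermost layer of $\fin^\alpha$ (the $\I_\gamma$-sum structure), so there is no need for, and it is unclear what is meant by, a partition of $\fin^\alpha$ with ``$\alpha$-depth controlled by $f(n)$.'' Finally, note that the case $\alpha=1$ is handled separately in the paper by citing earlier results, which you do not mention.
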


\begin{proof}
If $\alpha=1$ then this is true by \cite[Theorem 4.2(4) and Theorem 4.2(9)]{nasza}.

Let now $1<\alpha<\omega_1$ be any ordinal. We need to standardize the notation so that we will be able to proceed with the proof for both successor and limit ordinals $\alpha$. 

If $\alpha=\beta+1$ is a successor ordinal, put $\gamma=\omega$ and write $\beta_\delta=\beta$ for all $\delta<\gamma$. Note that $\fin^\alpha=\fin\otimes\fin^\beta=\I_\gamma-\sum_{\delta<\gamma}\fin^{\beta_\delta}$. 

On the other hand, if $\alpha$ is a limit ordinal, put $\gamma=\alpha$ and write $\beta_\delta=\delta$ for all $\delta<\gamma$. Note that $\fin^\alpha=\I_\alpha-\sum_{\delta<\gamma}\fin^{\delta}=\I_\gamma-\sum_{\delta<\gamma}\fin^{\beta_\delta}$.

Now we can proceed with the proof not caring if $\alpha$ is a successor or a limit ordinal.

For each $\delta<\gamma$ let $\fin^{\beta_\delta}(\omega)$ be any isomorphic copy of $\fin^{\beta_\delta}$ on $\omega$. Then $\fin^\alpha$ and $\J=\I_\gamma-\sum_{\delta<\gamma}\fin^{\beta_\delta}(\omega)$ are isomorphic, so $\J\sqsubseteq\I$. Thus, without loss of generality we may assume that $\I$ is an ideal on $\gamma\times\omega$ such that $\J\subseteq\I$ (this can be done by considering the ideal $\{f[A]:A\in\I\}\supseteq\J$, instead of $\I$, where $f$ is the bijection witnessing $\J\sqsubseteq\I$). 

We will show that $\bb(\I,\I,\J)\leq\dd$. This will finish the proof as $\bb(\I)\leq\bb(\I,\I,\J)$ (by $\J\subseteq\I$).

Fix a strictly increasing sequence $\langle \gamma_i:i\in\omega\rangle\in\gamma^\omega$ converging to $\gamma$ (in the order topology) such that $\gamma_0=0$. Let $\{f_\alpha:\alpha<\dd\}\subseteq\omega^\omega$ be a $\leq_\fin$-dominating family. Without loss of generality we may assume that each $f_\alpha$ is strictly increasing and satisfies $f_\alpha(0)=0$. Define $A^\alpha_n=(\gamma_{f_\alpha(n+1)}\setminus \gamma_{f_\alpha(n)})\times\omega$ for all $\alpha<\dd$ and $n\in\omega$. Then $\{\langle A^\alpha_n:n\in\omega\rangle:\alpha<\dd\}\subseteq\widehat{\cP}_{\J}$. We claim that this family witnesses $\bb(\I,\I,\J)\leq\dd$. 

Fix any $\langle B_n:n\in\omega\rangle\in\cP_{\I}$. There are two possibilities.

Assume first that there is $X\notin\I$ such that $(X\cap B_n)_{(\delta)}\in\fin^{\beta_\delta}(\omega)$, for all $n\in\omega$ and $\delta<\gamma$. For $\delta<\gamma$ let $m_\delta\in\omega$ be such that $\gamma_{f_0(m_\delta)}\leq\delta<\gamma_{f_0(m_\delta+1)}$ (i.e., $\{\delta\}\times\omega\subseteq A^0_{m_\delta}$). Observe that:
\begin{align*}
    \left(\bigcup_{i\in\omega}\left(A^0_i\setminus \bigcup_{n<i}B_n\right)\right)_{(\delta)} & \supseteq \left(A^0_{m_\delta}\setminus \bigcup_{n<m_\delta}B_n\right)_{(\delta)}\\
    & =\omega\setminus\bigcup_{n<m_\delta}(B_n)_{(\delta)}\\
    & \supseteq X_{(\delta)}\setminus\bigcup_{n<m_\delta}(X\cap B_n)_{(\delta)}.
\end{align*}
Define $Y_\delta=\bigcup_{n<m_\delta}(X\cap B_n)_{(\delta)}\in\fin^{\beta_\delta}(\omega)$ and $Y=\bigcup_{\delta<\gamma}\{\delta\}\times Y_\delta\in\J\subseteq\I$. Then we have:
$$\I\not\ni X\setminus Y\subseteq\bigcup_{i\in\omega}\left(A^0_i\setminus \bigcup_{n<i}B_n\right)\subseteq \bigcup_{n\in\omega}\left(B_n\cap\bigcup_{i\leq n}A^0_i\right)$$ 
(the last inclusion follows from Remark \ref{rem-zawieranie}). 

Assume now that if $X\subseteq\gamma\times\omega$ satisfies $(X\cap B_n)_{(\delta)}\in\fin^{\beta_\delta}(\omega)$ for all $n\in\omega$ and $\delta<\gamma$ then $X\in\I$. Recall that the ideal $\fin\otimes\J$ (which is isomorphic to $\fin^{\alpha+1}$) has three kinds of generators:
\begin{itemize}
    \item sets of the form $\{n\}\times\gamma\times\omega$, for $n\in\omega$ (\emph{generators of the first type});
    \item sets of the form $\bigcup_{n\in\omega}\{n\}\times \gamma_{g(n)}\times\omega$, for $g\in\omega^\omega$ (\emph{generators of the second type});
    \item sets $G\subseteq\omega\times\gamma\times\omega$ such that $G_{(n,\delta)}\in\fin^{\beta_\delta}(\omega)$, for all $(n,\delta)\in\omega\times\gamma$ (\emph{generators of the third type}).
\end{itemize}
Consider the function $h:\gamma\times\omega\to\omega\times\gamma\times\omega$ defined by the formula $h(\delta,j)=(n(\delta,j),\delta,j)$, where $n(\delta,j)\in\omega$ is given by $(\delta,j)\in B_{n(\delta,j)}$. Observe that:
\begin{itemize}
    \item if $G=\{n\}\times\gamma\times\omega$ is a generator of he first type, then $h^{-1}[G]\subseteq B_n\in\I$, for every $n\in\omega$;
    \item if $G\subseteq\omega\times\gamma\times\omega$ is a generator of the third type, then $(h^{-1}[G]\cap B_n)_{(\delta)}\subseteq G_{(n,\delta)}\in\fin^{\beta_\delta}(\omega)$, for all $(n,\delta)\in\omega\times\gamma$, so $h^{-1}[G]\in\I$ by our assumption.
\end{itemize}
On the other hand, $\fin^{\alpha+1}\not\leq_K\I$ (by $\fin^{\alpha+1}\not\sqsubseteq\I$ and \cite[Example 4.1]{Kat}), so there has to be a generator of the second type, $G=\bigcup_{n\in\omega}\{n\}\times \gamma_{g(n)}\times\omega$ for some $g\in\omega^\omega$, such that: 
$$\I\not\ni h^{-1}\left[G\right]\subseteq \bigcup_{n\in\omega}\left(B_n\cap(\gamma_{g(n)}\times\omega)\right).$$ 

Find $\alpha<\dd$ with $g\leq^\star f_\alpha$ and let $m\in\omega$ be such that $g(n)\leq f_\alpha(n)$ for all $n>m$. Note that:
\begin{align*}
    \bigcup_{n\in\omega}\left(B_n\cap\bigcup_{i\leq n}A^\alpha_i\right) & \supseteq\bigcup_{n\in\omega}\left(B_n\cap(\gamma_{f_\alpha(n)}\times\omega)\right)\\
    & \supseteq\bigcup_{n>m}\left(B_n\cap(\gamma_{g(n)}\times\omega)\right)\supseteq h^{-1}[G]\setminus \bigcup_{n\leq m} B_n\notin\I
\end{align*}
(as $\bigcup_{n\leq m} B_n\in\I$). This finishes the proof.
\end{proof}

\begin{lemma}
\label{Fin^alpha generators}
For each $1<\alpha<\omega_1$ there is a family $\{A_{f}:f\in\omega^\omega\}\subseteq\fin^\alpha$ satisfying:
\begin{itemize}
    \item for every $A\in\fin^\alpha$ there is $f\in\omega^\omega$ with $A\subseteq A_f$;
    \item $A_f\subseteq A_g$ whenever $f\leq g$ (i.e., $f(n)\leq g(n)$ for all $n\in\omega$).
\end{itemize}
\end{lemma}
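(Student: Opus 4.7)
The plan is to proceed by transfinite induction on $\alpha$. For convenience we also handle the trivial base case $\alpha = 1$: the family $A_f^1 = \{0, 1, \ldots, f(0)\}$ satisfies the requirements for $\fin^1 = \fin$. The successor and limit steps share a common coding device. Fix once and for all a pairing function $\pi : \omega \times \omega \to \omega$ that is strictly increasing in each variable (e.g., Cantor pairing) and satisfies $\pi(n,k) \geq 1$. For $f \in \omega^\omega$ set $f_n(k) = f(\pi(n, k))$; this turns $f$ into a sequence $(f_n)_{n \in \omega}$ in $\omega^\omega$, and crucially $f \leq g$ pointwise implies $f_n \leq g_n$ for every $n$. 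The slot $f(0)$ is reserved to encode an ``upper bound'' specific to each induction step.

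For the successor step, suppose $\{A_f^\alpha : f \in \omega^\omega\} \subseteq \fin^\alpha$ has been built on an underlying set $X_\alpha$, and recall that $\fin^{\alpha+1} = \fin \otimes \fin^\alpha$ lives on $\omega \times X_\alpha$. Define
$$A_f^{\alpha+1} = (\{0, \ldots, f(0) - 1\} \times X_\alpha) \cup \bigcup_{n \geq f(0)} \{n\} \times A_{f_n}^\alpha.$$
Only $f(0)$ columns fail to lie in $\fin^\alpha$, so $A_f^{\alpha+1} \in \fin^{\alpha+1}$. Conversely, given $A \in \fin^{\alpha+1}$, pick $n_0$ with $A_{(n)} \in \fin^\alpha$ for $n \geq n_0$, apply the IH to choose $g_n$ with $A_{(n)} \subseteq A_{g_n}^\alpha$ for $n \geq n_0$, and assemble $f$ via $f(0) = n_0$ and $f(\pi(n,k)) = g_n(k)$ (otherwise $f = 0$). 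Monotonicity splits into three column sub-cases according to the positions of $n$, $f(0)$, $g(0)$, each trivial or handled by the IH combined with $f_n \leq g_n$.

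For the limit step, enumerate $\alpha = \{\beta_n : n \in \omega\}$ and fix a strictly increasing sequence $(\gamma_m)_{m \in \omega}$ of ordinals cofinal in $\alpha$. Since $\fin^\alpha = \I_\alpha - \sum_{\beta < \alpha} \fin^\beta$ lives on $\sum_{\beta < \alpha} X_\beta$, set
$$A_f^\alpha = \bigcup_{\beta < \gamma_{f(0)}} \{\beta\} \times X_\beta \,\cup\, \bigcup_{n \,:\, \beta_n \geq \gamma_{f(0)}} \{\beta_n\} \times A_{f_n}^{\beta_n}.$$
The vertical sections of $A_f^\alpha$ that fail to be in $\fin^{\beta}$ all have $\beta < \gamma_{f(0)} < \alpha$, so their index set is in $\I_\alpha$ and $A_f^\alpha \in \fin^\alpha$. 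For covering, given $A \in \fin^\alpha$ the bounded bad-index set lies in some $\gamma_m$, so take $f(0) = m$ and use the tail components $f_n$ (via the IH for each $\fin^{\beta_n}$) to bound $A_{(\beta_n)}$ for $\beta_n \geq \gamma_m$. Monotonicity follows from $\gamma_{f(0)} \leq \gamma_{g(0)}$ together with the IH, by a three-case analysis of $\beta$.

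The main conceptual obstacle is that a single $f \in \omega^\omega$ must simultaneously encode (i) the size of the ``bounded bad part'' at the current recursion level and (ii) an entire $\omega$-sequence of functions controlling the tail sections, and the encoding must respect pointwise $\leq$. The splitting $f \mapsto (f_n)_{n \in \omega}$ via a coordinate-wise monotone pairing function handles (ii) cleanly, while reserving $f(0)$ handles (i); once this coding is in place, all remaining verifications reduce to routine case analysis.
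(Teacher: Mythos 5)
Your proof is correct and follows essentially the same strategy as the paper's: reserve $f(0)$ to bound the ``non-small'' part at the current level and split the rest of $f$ into an $\omega$-sequence of functions via a coordinatewise-monotone coding, then recurse on the sections. The only cosmetic difference is the coding device — you use a pairing function with $\pi(n,k)\geq 1$, while the paper fixes a partition $\langle C_n\rangle$ of $\omega$ into infinite sets and passes to $f\circ h_n$ with $h_n$ the increasing enumeration of $C_n$ — and your version has the small advantage of keeping the slot $0$ cleanly disjoint from the sequence coding.
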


\begin{proof}
We will show it inductively. 

This is clear for $\alpha=2$ as witnessed by the family: 
$$\left\{(f(0)\times\omega)\cup\bigcup_{i\in\omega}\{i\}\times f(i):f\in\omega^\omega\right\}\subseteq\fin^2.$$

Fix now any partition $\langle C_n:n\in\omega\rangle$ of $\omega$ into infinite sets and for each $n$ let $h_n:\omega\to C_n$ be the increasing enumeration of $C_n$.

Assume that for some $1<\alpha<\omega_1$ we have the needed family $\{A_f:f\in\omega^\omega\}\subseteq\fin^\alpha$. We need to find the required family for $\alpha+1$. Given $f\in\omega^\omega$, define:
$$B_f=(f(0)\times \bigcup\fin^\alpha)\cup\bigcup_{n\in\omega}\{n\}\times A_{f\circ h_n}.$$
Again it is clear that $\{B_{f}:f\in\omega^\omega\}\subseteq\fin^{\alpha+1}$ is as needed. 

Finally, assume that $1<\alpha<\omega_1$ is a limit ordinal and for each $\beta<\alpha$ we have the required family $\{A^\beta_{f}:f\in\omega^\omega\}\subseteq\fin^\beta$. Find a bijection $g:\omega\to\alpha$ and a strictly increasing sequence $\langle\beta_i:i\in\omega\rangle$ converging to $\alpha$ (in the order topology). For each $f\in\omega^\omega$, define:
$$B_f=\left(\sum_{\beta<\beta_{f(0)}}\bigcup\fin^\beta\right)\cup\left(\bigcup_{n\in\omega}\{g(n)\}\times A^{g(n)}_{f\circ h_n}\right).$$
Once more it is clear that $\{B_{f}:f\in\omega^\omega\}\subseteq\fin^{\alpha}$ is as needed.
\end{proof}

\begin{lemma}
\label{limit2}
If $\alpha<\omega_1$ is a limit ordinal, $\fin^{\alpha}\not\sqsubseteq\I$, but $\fin^{\beta}\sqsubseteq\I$ for all $\beta<\alpha$, then $\bb(\I)\leq\dd$.
\end{lemma}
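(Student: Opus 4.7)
The plan is to mirror the limit-case portion of the proof of Lemma \ref{successor}, now using Lemma \ref{Fin^alpha generators} as a substitute for the copy of $\fin^\alpha$ inside $\I$ that the hypothesis $\fin^\alpha\sqsubseteq\I$ supplied there. The first step is to upgrade $\fin^\alpha\not\sqsubseteq\I$ to $\fin^\alpha\not\leq_K\I$ via \cite[Example 4.1]{Kat}, exactly as in Lemma \ref{successor}, and then to observe that this implies $\fin^{\alpha+1}\not\leq_K\I$ (since $\fin^\alpha\leq_K\fin^{\alpha+1}$ via the natural projection, the contrapositive gives the implication).

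Fix a strictly increasing sequence $\langle\beta_n:n\in\omega\rangle$ cofinal in $\alpha$ with $\beta_0=0$; using $\fin^{\beta_n}\sqsubseteq\I$, pick bijections $\psi_n\colon Y_n\to X_{\beta_n}$ witnessing the embeddings, and arrange by a short inductive shuffle that the $Y_n$'s are pairwise disjoint with $\bigsqcup_n Y_n=\omega$. Fix a pointwise-dominating family $\{f_\mu:\mu<\dd\}\subseteq\omega^\omega$ of size $\dd$ (obtainable from any $\leq_\fin$-dominating family by shifting by constants) and the family $\{A_f:f\in\omega^\omega\}\subseteq\fin^\alpha$ from Lemma \ref{Fin^alpha generators}. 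For each $\mu<\dd$, set $E^\mu_n=\psi_n^{-1}\bigl[(A_{f_\mu})_{(\beta_n)}\bigr]\subseteq Y_n$; these assemble into $\cE^\mu=\langle E^\mu_n:n\in\omega\rangle\in\widehat{\cP}_\I$.

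To verify that $\{\cE^\mu:\mu<\dd\}$ witnesses $\bb(\I)\leq\dd$, fix a partition $\langle B_n\rangle\in\cP_\I$ and define $h\colon\omega\to\omega\times X_\alpha$ by $h(y)=\bigl(n(y),(\beta_{i(y)},\psi_{i(y)}(y))\bigr)$, where $n(y)$ and $i(y)$ are the unique indices with $y\in B_{n(y)}\cap Y_{i(y)}$. By $\fin^{\alpha+1}\not\leq_K\I$ there is $A\in\fin^{\alpha+1}$ with $h^{-1}[A]\notin\I$; absorbing the finitely many first-coordinate sections $n$ with $A_{(n)}\notin\fin^\alpha$ into an $\I$-set (each contributing at most $B_n\in\I$), one may assume $A=\bigcup_n\{n\}\times C_n$ with every $C_n\in\fin^\alpha$. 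Applying Lemma \ref{Fin^alpha generators} coordinate-by-coordinate and diagonalizing against the dominating family yields a single $\mu<\dd$ such that $C_n\cap\sum_{i\leq n}X_{\beta_i}\subseteq A_{f_\mu}$ for every $n$; a direct computation using Remark \ref{rem-zawieranie} then identifies $h^{-1}[A]$, up to an $\I$-set, with a subset of $\bigcup_n(B_n\cap\bigcup_{i\leq n}E^\mu_i)$, forcing the latter out of $\I$.

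The main obstacle is the uniform-$\mu$ step: having extracted $\langle C_n\rangle\subseteq\fin^\alpha$ from the failure of the Kat\v{e}tov reduction, one must find a single $f_\mu$ in the dominating family whose canonical set $A_{f_\mu}$ absorbs the relevant part $C_n\cap\sum_{i\leq n}X_{\beta_i}$ for every $n$ simultaneously. This coupling between the first-coordinate level $n$ and the second-coordinate cutoff $i\leq n$ is exactly why the target of $h$ must be $\omega\times X_\alpha$ (and the relevant ideal $\fin^{\alpha+1}$) rather than just $X_\alpha$; it is also where the monotonicity of $\{A_f\}$ guaranteed by Lemma \ref{Fin^alpha generators} becomes essential, since it reduces the uniform-$\mu$ search to a single diagonalization in $\omega^\omega$ handled by the dominating family of size $\dd$.
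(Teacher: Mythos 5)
Your approach diverges from the paper's in two structural choices, and it is precisely those choices that make the crucial \emph{uniform-$\mu$} step impossible. First, you run the Kat\v{e}tov non-reduction against $\fin^{\alpha+1}$ (rather than directly against $\fin^\alpha$, as the paper does). Second, you realize the embeddings $\fin^{\beta_n}\sqsubseteq\I$ on pairwise-disjoint infinite sets $Y_n\subseteq\omega$ and feed them all with \emph{one} $\fin^\alpha$-scale $\{A_f:f\in\omega^\omega\}$ from Lemma~\ref{Fin^alpha generators}. The step that fails is the claim that there is a single $\mu<\dd$ with $C_n\cap\sum_{i\le n}X_{\beta_i}\subseteq A_{f_\mu}$ for every $n$: all the $A_f$'s lie in $\fin^\alpha$, so one $A_{f_\mu}$ can contain $\bigcup_n\bigl(C_n\cap\sum_{i\le n}X_{\beta_i}\bigr)$ only if this union itself lies in $\fin^\alpha$; but its $\beta_j$-section is $\bigcup_{n\ge j}(C_n)_{(\beta_j)}$, a countable union of $\fin^{\beta_j}$-sets, and $\fin^{\beta_j}$ is not $\sigma$-closed, so nothing makes these sections small. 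Diagonalizing against the dominating family does not help, because Lemma~\ref{Fin^alpha generators} gives monotonicity $A_f\subseteq A_g$ only for the \emph{everywhere} order $f\le g$, while a dominating family supplies only eventual domination $\le^\star$. There is a second gap as well: your target $\bigcup_n(B_n\cap\bigcup_{i\le n}E^\mu_i)$ is contained in $\{y:i(y)\le n(y)\}$, yet the portion $h^{-1}[A]\cap\{y:i(y)>n(y)\}=\bigcup_{n<i}\bigl(B_n\cap Y_i\cap\psi_i^{-1}[(C_n)_{(\beta_i)}]\bigr)$ is an infinite union of $\I$-sets with no visible reason to be in $\I$, and the proposal never addresses it.

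The paper avoids both obstacles by doing the interleaving \emph{before} invoking the scale. Copies $\fin^{\beta_n}(\omega)\subseteq\I$ all live on the whole of $\omega$ (no disjoint $Y_n$'s), each comes with its own Lemma~\ref{Fin^alpha generators}-scale $\{B^n_f\}\subseteq\fin^{\beta_n}(\omega)$, and the witnessing family is $A^\mu_n=B^n_{f_\mu\circ h_n}\setminus\bigcup_{i<n}A^\mu_i$, where $h_n$ enumerates the $n$th block of a fixed partition of $\omega$. Disjointness is produced by the recursive subtraction, and composing with $h_n$ converts $g\le^\star f_\mu$ into the needed \emph{pointwise} inequality $g_n\le f_\mu\circ h_n$ for all large $n$, since $h_n$ takes only large values once $n$ is large. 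The Kat\v{e}tov step then uses $\fin^\alpha\not\le_K\I$ directly via $h(i)=(\beta_n,i)$ for $i\in B_n$, so the $\{i(y)>n(y)\}$ issue never arises. As written, your proposal does not prove the lemma; you would need to restructure the construction along these lines.
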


\begin{proof}
Find an increasing sequence $\langle\beta_i:i\in\omega\rangle$ converging to $\alpha$ (in the order topology). Since $\fin^{\beta}\sqsubseteq\I$ for all $\beta<\alpha$, we can assume that $\I$ is an ideal on $\omega$ and $\fin^{\beta_i}(\omega)\subseteq\I$ for all $i$, where $\fin^{\beta_i}(\omega)$ is some copy of $\fin^{\beta_i}$ on $\omega$. For each $i\in\omega$ let $\{B^i_{f}:f\in\omega^\omega\}\subseteq\fin^{\beta_i}(\omega)$ be the family from Lemma \ref{Fin^alpha generators}. 

Fix any partition $\langle C_n:n\in\omega\rangle$ of $\omega$ into infinite sets and for each $n$ let $h_n:\omega\to C_n$ be the increasing enumeration of $C_n$. Let $\{f_\alpha:\alpha<\dd\}\subseteq\omega^\omega$ be a $\leq_\fin$-dominating family of strictly increasing functions and recursively define $A^\alpha_n=B^n_{f_\alpha\circ h_n}\setminus\bigcup_{i<n}A^\alpha_i$, for all $\alpha<\dd$ and $n\in\omega$. Then $\{\langle A^\alpha_n:n\in\omega\rangle:\alpha<\dd\}\subseteq\widehat{\cP}_\I$ as $\fin^{\beta_i}(\omega)\subseteq\I$ for all $i$. We will show that $\{A^\alpha_n:n\in\omega,\alpha<\dd\}$ witnesses $\bb(\I)\leq\dd$.

Fix any $\langle B_n: n\in\omega\rangle\in\cP_{\I}$ and consider the function $h:\omega\to\alpha\times\omega$ defined by $h(i)=(\beta_n,i)$, where $n\in\omega$ is given by $i\in B_n$. Since $\I_\alpha-\sum_{\beta<\alpha}\fin^{\beta}(\omega)$ is isomorphic to $\fin^\alpha$ and $\fin^{\alpha}\not\leq_K\I$ (by $\fin^{\alpha}\not\sqsubseteq \I$ and \cite[Example 4.1]{Kat}), there is $Y\in \I_\alpha-\sum_{\beta<\alpha}\fin^{\beta}(\omega)$ such that $h^{-1}[Y]\notin\I$. Actually, for each $\beta<\alpha$ either $h^{-1}[\{\beta\}\times\omega]=\emptyset\in\I$ (if $\beta\neq\beta_i$ for all $i\in\omega$) or $h^{-1}[\{\beta\}\times\omega]\subseteq B_n\in\I$ for some $n\in\omega$ (if $\beta=\beta_n$). Hence, the set:
$$X=h^{-1}\left[\bigcup\left\{\{\delta\}\times Y_{(\delta)}: Y_{(\delta)}\in\fin^\delta(\omega)\right\}\right]$$
does not belong to $\I$, while $h[X]\subseteq h[h^{-1}[Y]]\subseteq Y\in \I_\alpha-\sum_{\beta<\alpha}\fin^{\beta}(\omega)$. Observe that $X\cap B_n\subseteq (h[X])_{(\beta_n)}\in\fin^{\beta_n}$ for all $n\in\omega$. 

Then for each $n$ there is $g_n\in\omega^\omega$ such that $X\cap B_n\subseteq B^n_{g_n}$. Define $g\in\omega^\omega$ by $g\restriction C_n=g_n\circ h_n^{-1}$. Find $\alpha<\dd$ with $g\leq^\star f_\alpha$ and let $m\in\omega$ be such that $g(i)\leq f_\alpha(i)$ whenever $i\geq m$. Let $k\in\omega$ be maximal such that $C_k\cap m\neq\emptyset$. Observe that for $n>k$ and any $j\in\omega$ we have:
$$(f_\alpha\circ h_n)(j)=f_\alpha(h_n(j))\geq g(h_n(j))=g_n(j).$$
Hence, $X\cap B_n\subseteq B^n_{g_n}\subseteq B^n_{f_\alpha\circ h_n}\subseteq\bigcup_{i\leq n}A^\alpha_i$, whenever $n>k$. Thus, 
$$\bigcup_{n\in\omega}\left(B_n\cap\bigcup_{i\leq n}A^\alpha_i\right)\supseteq\bigcup_{n>k}\left(B_n\cap \bigcup_{i\leq n}A^\alpha_i\right)\supseteq\bigcup_{n>k}\left(X\cap B_n\right)\notin\I$$
as $\I\not\ni X=\bigcup_{n\in\omega}\left(X\cap B_n\right)$ and $\bigcup_{n\leq k}B_n\in\I$.
\end{proof}

\begin{theorem}
\label{Fin^alpha}
If there is $\alpha<\omega_1$ such that $\fin^\alpha\not\sqsubseteq\I$, then $\bb(\I)\leq\dd$.
\end{theorem}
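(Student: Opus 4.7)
The plan is to reduce the theorem to direct applications of Lemmas~\ref{successor} and~\ref{limit2} by taking the least counterexample ordinal. I would let $\alpha_0$ be the minimum element of
\[
\{\alpha\in[1,\omega_1):\fin^\alpha\not\sqsubseteq\I\},
\]
which is nonempty by hypothesis. A preliminary observation is that $\alpha_0\geq 2$: writing $X$ for the underlying set of $\I$, any bijection $f:X\to\omega$ witnesses $\fin\sqsubseteq\I$, since $f^{-1}[A]$ is finite for every finite $A\subseteq\omega$, and every finite subset of $X$ belongs to $\I$.

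The argument then splits in two cases according to the type of $\alpha_0$. If $\alpha_0=\beta+1$ is a successor ordinal, then $\beta\geq 1$ and minimality of $\alpha_0$ yields $\fin^\beta\sqsubseteq\I$ while $\fin^{\beta+1}=\fin^{\alpha_0}\not\sqsubseteq\I$; invoking Lemma~\ref{successor} with the ordinal $\beta$ then gives $\bb(\I)\leq\dd$. If on the other hand $\alpha_0$ is a limit ordinal, minimality gives $\fin^\beta\sqsubseteq\I$ for every $\beta<\alpha_0$ while $\fin^{\alpha_0}\not\sqsubseteq\I$, and Lemma~\ref{limit2} applied with $\alpha=\alpha_0$ yields the same conclusion.

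There is really no remaining obstacle here, because the two preceding lemmas were designed precisely to supply the successor step and the limit step of such a case analysis. The only subtlety worth highlighting in the write-up is the verification $\alpha_0\geq 2$, which is needed in the successor case to ensure that the hypothesis $\alpha\geq 1$ of Lemma~\ref{successor} is satisfied; this follows at once from the convention that every ideal on $X$ contains all finite subsets of $X$.
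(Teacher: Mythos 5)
Your proof is correct and takes essentially the same approach as the paper's: take the least $\alpha_0$ with $\fin^{\alpha_0}\not\sqsubseteq\I$ and split into the successor case (Lemma~\ref{successor}) and the limit case (Lemma~\ref{limit2}). The only difference is that you explicitly verify $\alpha_0\geq 2$ so that the hypothesis $\alpha\geq 1$ in Lemma~\ref{successor} is met, a small point the paper's proof leaves implicit.
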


\begin{proof}
Let $\alpha<\omega_1$ be minimal such that $\fin^{\alpha}\not\sqsubseteq\I$. If $\alpha$ is a successor ordinal, use Lemma \ref{successor}. On the other hand, if $\alpha$ is a limit ordinal, we can apply Lemma \ref{limit2}.
\end{proof}

Recall that for an ideal $\I$ on $X$ we write $\I^\star=\{A\subseteq X: X\setminus A\in\I\}$ and call it the \emph{dual filter of $\I$}. 

\begin{corollary}
\label{rk}
If $\I$ is Borel-separated from $\I^\star$ (i.e., there is a Borel set $S\subseteq 2^\omega$ such that $\I\subseteq S$ and $\I^\star\cap S=\emptyset$), then $\bb(\I)\leq\dd$. In particular, $\bb(\I)\leq\dd$ for every analytic ideal $\I$.
\end{corollary}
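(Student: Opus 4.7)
My plan is to reduce the statement to Theorem~\ref{Fin^alpha} by proving that Borel separation of $\I$ from $\I^\star$ forces $\fin^\alpha\not\sqsubseteq\I$ for some $\alpha<\omega_1$. The ``In particular'' clause will then follow from Lusin's first separation theorem: for any analytic ideal $\I$ the dual $\I^\star$ is also analytic, since the complementation map $A\mapsto\omega\setminus A$ is a self-homeomorphism of $2^\omega$; moreover $\I\cap\I^\star=\emptyset$, because otherwise $\omega=A\cup(\omega\setminus A)\in\I$ would contradict axiom~(iv); hence Lusin separates $\I$ from $\I^\star$ by a Borel set.

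For the main implication, I would argue by contraposition. Suppose some Borel set $S$ satisfies $\I\subseteq S$ and $S\cap\I^\star=\emptyset$, and let $\beta<\omega_1$ be the Borel class of $S$. Assume toward contradiction that $\fin^\alpha\sqsubseteq\I$ for every $\alpha<\omega_1$, and for each such $\alpha$ fix a bijection $f_\alpha\colon\omega\to Y_\alpha$ (where $\fin^\alpha$ lives on $Y_\alpha$) with $f_\alpha^{-1}[A]\in\I$ for every $A\in\fin^\alpha$. Then $f_\alpha^{-1}[A]\in\I^\star$ whenever $A\in\fin^{\alpha\star}$, so the pushforward $S_\alpha=\{B\subseteq Y_\alpha:f_\alpha^{-1}[B]\in S\}$ is a Borel set of class $\beta$ which separates $\fin^\alpha$ from $\fin^{\alpha\star}$; in other words, a single Borel class $\beta$ suffices to separate every $\fin^\alpha$ from its dual filter, uniformly in $\alpha<\omega_1$.

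The remaining step is the main obstacle, and it is where I would invoke the machinery of Debs and Saint Raymond from \cite{Debs}. The expected lower bound says that the minimal Borel class required to separate $\fin^\alpha$ from its dual filter grows without bound as $\alpha$ ranges over $\omega_1$, reflecting the fact that the ideals $\fin^\alpha$ strictly ascend in the ``Borel separation'' hierarchy as $\alpha$ increases. Once this lower bound is in hand, choosing $\alpha$ large enough that the complexity exceeds $\beta$ contradicts the existence of $S_\alpha$; hence some $\alpha$ with $\fin^\alpha\not\sqsubseteq\I$ must exist, and Theorem~\ref{Fin^alpha} completes the proof. Packaging the lower bound cleanly—via a transfinite induction tracking how the successor and limit clauses in the definition of $\fin^\alpha$ interact with the Borel rank of a putative separator—is the technical heart of the argument and is essentially the content drawn from \cite{Debs}.
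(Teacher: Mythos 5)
Your proposal is correct and follows essentially the same route as the paper: contraposition, reducing to Theorem~\ref{Fin^alpha} via the Debs--Saint Raymond complexity lower bound, and Lusin separation for the analytic case. The only cosmetic difference is that you unpack the pushforward step $B\mapsto f_\alpha^{-1}[B]$ explicitly, whereas the paper cites a single statement from~\cite{Debs} ($\fin^\alpha\sqsubseteq\I$ implies $\I$ is not $\mathbf{\Delta^0_{1+\alpha}}$-separated from $\I^\star$) that already packages that homeomorphism argument together with the lower bound you identify as the technical heart.
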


\begin{proof}
We claim that there is $\alpha<\omega_1$ such that $\fin^\alpha\not\sqsubseteq\I$. Indeed, by \cite[Theorem 3.2, Theorem 6.5 and Lemma 7.2]{Debs}, if $\fin^\alpha\sqsubseteq\I$, then $\I$ is not $\bf{\Delta^0_{1+\alpha}}$-separated from $\I^\star$. Thus, the assumption that $\fin^\alpha\sqsubseteq\I$, for all $\alpha<\omega_1$, would contradict the fact that $\I$ is Borel-separated from $\I^\star$. Now it suffices to apply Theorem \ref{Fin^alpha}.

For each analytic ideal, its dual filter is also analytic (by \cite[Subsection 1.1]{Debs}), so they can be Borel-separated (by the Lusin separation theorem). This proves the "In particular" part.
\end{proof}

Actually, we do not know whether Corollary \ref{rk} can be strengthened by replacing $\dd$ with $\bb$. In particular, we do not know the answer to the following problem.

\begin{problem}
Does there consistently exist an analytic ideal $\I$ with $\bb(\I)>\bb$?
\end{problem}


%


\bibliographystyle{amsplain}
\bibliography{b-references}

\providecommand{\bysame}{\leavevmode\hbox to3em{\hrulefill}\thinspace}
\providecommand{\MR}{\relax\ifhmode\unskip\space\fi MR }
\providecommand{\MRhref}[2]{%
  \href{http://www.ams.org/mathscinet-getitem?mr=#1}{#2}
}
\providecommand{\href}[2]{#2}
\begin{thebibliography}{10}

\bibitem{Kat}
Pawe\l{} Barbarski, Rafa\l{} Filip\'{o}w, Nikodem Mro\.{z}ek, and Piotr Szuca, \emph{When does the katětov order imply that one ideal extends the other?}, Colloq. Math. \textbf{130} (2013), 91--102.

\bibitem{Bartoszynski}
Tomek Bartoszy\'{n}ski, \emph{Invariants of measure and category}, Handbook of set theory. {V}ols. 1, 2, 3, Springer, Dordrecht, 2010, pp.~491--555. \MR{2768686}

\bibitem{MR1186905}
Tomek Bartoszy\'{n}ski and Saharon Shelah, \emph{Closed measure zero sets}, Ann. Pure Appl. Logic \textbf{58} (1992), no.~2, 93--110. \MR{1186905}

\bibitem{MR2768685}
Andreas Blass, \emph{Combinatorial cardinal characteristics of the continuum}, Handbook of set theory. {V}ols. 1, 2, 3, Springer, Dordrecht, 2010, pp.~395--489. \MR{2768685}

\bibitem{MR2463820}
Lev Bukovsk\'{y}, \emph{On {$wQN_*$} and {$wQN^*$} spaces}, Topology Appl. \textbf{156} (2008), no.~1, 24--27. \MR{2463820}

\bibitem{MR2778559}
\bysame, \emph{The structure of the real line}, Instytut Matematyczny Polskiej Akademii Nauk. Monografie Matematyczne (New Series) [Mathematics Institute of the Polish Academy of Sciences. Mathematical Monographs (New Series)], vol.~71, Birkh\"{a}user/Springer Basel AG, Basel, 2011. \MR{2778559}

\bibitem{MR3622377}
Lev Bukovsk\'{y}, Pratulananda Das, and Jaroslav \v{S}upina, \emph{Ideal quasi-normal convergence and related notions}, Colloq. Math. \textbf{146} (2017), no.~2, 265--281. \MR{3622377}

\bibitem{MR2294632}
Lev Bukovsk\'{y} and Jozef Hale\v{s}, \emph{{$QN$}-spaces, {$wQN$}-spaces and covering properties}, Topology Appl. \textbf{154} (2007), no.~4, 848--858. \MR{2294632}

\bibitem{MR1129696}
Lev Bukovsk\'{y}, Ireneusz Rec\l{}aw, and Miroslav Repick\'{y}, \emph{Spaces not distinguishing pointwise and quasinormal convergence of real functions}, Topology Appl. \textbf{41} (1991), no.~1-2, 25--40. \MR{1129696}

\bibitem{MR1815270}
\bysame, \emph{Spaces not distinguishing convergences of real-valued functions}, Topology Appl. \textbf{112} (2001), no.~1, 13--40. \MR{1815270}

\bibitem{Canjar2}
Michael Canjar, \emph{Countable ultraproducts without {CH*}}, Ann. Pure Appl. Logic \textbf{37} (1988), no.~1, 1--79. \MR{924678}

\bibitem{Canjar}
R.~Michael Canjar, \emph{Cofinalities of countable ultraproducts: the existence theorem}, Notre Dame J. Formal Logic \textbf{30} (1989), no.~4, 539--542. \MR{1036675}

\bibitem{CanjarPhD}
Robert~Michael Canjar, \emph{Model-theoretic properties of countable ultraproducts without the {C}ontinuum {H}ypothesis}, ProQuest LLC, Ann Arbor, MI, 1982, Thesis (Ph.D.)--University of Michigan. \MR{2632174}

\bibitem{MR3038073}
Pratulananda Das and Debraj Chandra, \emph{Spaces not distinguishing pointwise and {$\mathcal I$}-quasinormal convergence}, Comment. Math. Univ. Carolin. \textbf{54} (2013), no.~1, 83--96. \MR{3038073}

\bibitem{Debs}
Gabriel Debs and Jean Saint~Raymond, \emph{Filter descriptive classes of {B}orel functions}, Fund. Math. \textbf{204} (2009), no.~3, 189--213. \MR{2520152}

\bibitem{FarahSolecki}
Ilijas Farah and S\l{}awomir Solecki, \emph{Two {$F_{\sigma\delta}$} ideals}, Proc. Amer. Math. Soc. \textbf{131} (2003), no.~6, 1971--1975. \MR{1955288}

\bibitem{nasza}
Rafa\l{} Filip\'{o}w and Adam Kwela, \emph{Yet another ideal version of the bounding number}, J. Symb. Log. \textbf{87} (2022), no.~3, 1065--1092. \MR{4472525}

\bibitem{MR3269495}
Rafa\l{} Filip\'{o}w and Marcin Staniszewski, \emph{Pointwise versus equal (quasi-normal) convergence via ideals}, J. Math. Anal. Appl. \textbf{422} (2015), no.~2, 995--1006. \MR{3269495}

\bibitem{Fremlin}
D.~H. Fremlin, \emph{Consequences of {M}artin's axiom}, Cambridge Tracts in Mathematics, vol.~84, Cambridge University Press, Cambridge, 1984. \MR{780933}

\bibitem{MR2777744}
Michael Hru\v{s}\'{a}k, \emph{Combinatorics of filters and ideals}, Set theory and its applications, Contemp. Math., vol. 533, Amer. Math. Soc., Providence, RI, 2011, pp.~29--69. \MR{2777744}

\bibitem{MR3032424}
Teppo Kankaanp\"{a}\"{a}, \emph{Remarks on gaps in {D}ense({$\Bbb Q$})/nwd}, MLQ Math. Log. Q. \textbf{59} (2013), no.~1-2, 51--61. \MR{3032424}

\bibitem{Katetov}
Miroslav Kat\v{e}tov, \emph{Products of filters}, Comment. Math. Univ. Carolinae \textbf{9} (1968), 173--189. \MR{250257}

\bibitem{Kat2}
\bysame, \emph{On descriptive classes of functions}, Theory of sets and topology (in honour of {F}elix {H}ausdorff, 1868--1942), VEB Deutsch. Verlag Wissensch., Berlin, 1972, pp.~265--278. \MR{0345060}

\bibitem{Kat1}
\bysame, \emph{On descriptive classification of functions}, General topology and its relations to modern analysis and algebra, {III} ({P}roc. {T}hird {P}rague {T}opological {S}ympos., 1971), Academia, Prague, 1972, pp.~235--242. \MR{0355956}

\bibitem{MR2905394}
Kenneth Kunen, \emph{Set theory}, Studies in Logic (London), vol.~34, College Publications, London, 2011. \MR{2905394}

\bibitem{MR3784399}
Adam Kwela, \emph{Ideal weak {QN}-spaces}, Topology Appl. \textbf{240} (2018), 98--115. \MR{3784399}

\bibitem{KwelaSabok}
Adam Kwela and Marcin Sabok, \emph{Topological representations}, J. Math. Anal. Appl. \textbf{422} (2015), no.~2, 1434--1446. \MR{3269521}

\bibitem{MR1477547}
Ireneusz Rec\l{}aw, \emph{Metric spaces not distinguishing pointwise and quasinormal convergence of real functions}, Bull. Polish Acad. Sci. Math. \textbf{45} (1997), no.~3, 287--289. \MR{1477547}

\bibitem{MR1800160}
Miroslav Repick\'{y}, \emph{Spaces not distinguishing convergences}, Comment. Math. Univ. Carolin. \textbf{41} (2000), no.~4, 829--842. \MR{1800160}

\bibitem{Repicky-1}
\bysame, \emph{Spaces not distinguishing ideal convergences of real-valued functions}, Real Anal. Exchange \textbf{46} (2021), no.~2, 367--394. \MR{4336563}

\bibitem{Repicky-2}
\bysame, \emph{Spaces not distinguishing ideal convergences of real-valued functions, {II}}, Real Anal. Exchange \textbf{46} (2021), no.~2, 395--421. \MR{4336564}

\bibitem{SabokZapletal}
Marcin Sabok and Jindrich Zapletal, \emph{Forcing properties of ideals of closed sets}, Journal of Symbolic Logic \textbf{76} (2010), 4011--4039.

\bibitem{MR2280899}
Masami Sakai, \emph{The sequence selection properties of {$C_p(X)$}}, Topology Appl. \textbf{154} (2007), no.~3, 552--560. \MR{2280899}

\bibitem{MR3624786}
Marcin Staniszewski, \emph{On ideal equal convergence {II}}, J. Math. Anal. Appl. \textbf{451} (2017), no.~2, 1179--1197. \MR{3624786}

\bibitem{MR2881299}
Boaz Tsaban and Lyubomyr Zdomskyy, \emph{Hereditarily {H}urewicz spaces and {A}rhangel'ski\u{\i} sheaf amalgamations}, J. Eur. Math. Soc. (JEMS) \textbf{14} (2012), no.~2, 353--372. \MR{2881299}

\bibitem{MR3924519}
Viera \v{S}ottov\'{a} and Jaroslav \v{S}upina, \emph{Principle {${\text{S}}_1(\mathcal{P},\mathcal{R})$}: ideals and functions}, Topology Appl. \textbf{258} (2019), 282--304. \MR{3924519}

\bibitem{MR3423409}
Jaroslav \v{S}upina, \emph{Ideal {QN}-spaces}, J. Math. Anal. Appl. \textbf{435} (2016), no.~1, 477--491. \MR{3423409}

\end{thebibliography}

\end{document}